\newdimen\rh@wd
\newdimen\rh@hta
\newdimen\rh@htb
\newbox\rh@box
\def\rh@measure#1{\setbox\rh@box=\hbox{$#1$}\rh@wd=\wd\rh@box \rh@hta=\ht\rh@box}
\def\widecheck#1{\rh@measure{#1}%
  \setbox\rh@box=\hbox{$\widehat{\vrule height \rh@hta width\z@ \kern\rh@wd}$}%
  \rh@htb=\ht\rh@box \advance\rh@htb\rh@hta \advance\rh@htb\p@
  \ooalign{$\vrule height \ht\rh@box width\z@ #1$\cr
           \raise\rh@htb\hbox{\scalebox{1}[-1]{\box\rh@box}}\cr}}
\newcommand{\abs}[1]{\mathopen\lvert#1\mathclose\rvert}
\newcommand{\Bigabs}[1]{\Bigl\lvert#1\Bigr\rvert}
\newcommand{\biggabs}[1]{\biggl\lvert#1\biggr\rvert}
\newcommand{\norm}[1]{\mathopen\lVert#1\mathclose\rVert}
\newcommand{\N}{{\mathbb N}}
\newcommand{\R}{{\mathbb R}}
\newcommand{\CC}{{\mathbb C}}
\newcommand{\cH}{\mathcal{H}}
\newcommand{\e}{{\mathrm{e}}}
\DeclareMathOperator{\supp}{supp}
\DeclareMathOperator{\Dom}{dom}
\newcommand{\dif}{\,\mathrm{d}}
\newcommand{\Lebesgue}[1]{\widehat{#1}}
\newcommand{\loc}{_\mathrm{loc}}
\theoremstyle{plain}
\newtheorem{proposition}{Proposition}[section]
\newtheorem{lemma}[proposition]{Lemma}
\newtheorem{theorem}[proposition]{Theorem}
\newtheorem{corollary}[proposition]{Corollary}
\theoremstyle{definition}
\theoremstyle{remark}
\newtheorem{example}{Example}[section]
\newtheorem{openproblem}[proposition]{Open problem}
\newtheorem*{Claim}{Claim}
\numberwithin{equation}{section}
\title[The precise representative for the gradient of the Riesz~potential]{The precise representative for the gradient of the Riesz potential of a finite measure}
\author{Julià Cuf\'i}
\address{
Julià Cufí\hfill\break\indent
Universitat Autònoma de Barcelona\hfill\break\indent
Departament de Matemàtiques\hfill\break\indent
08193 Bellaterra, Barcelona, Catalonia}
\email{jcufi@mat.uab.cat}
\author{Augusto C. Ponce}
\address{
Augusto C. Ponce\hfill\break\indent
Université catholique de Louvain\hfill\break\indent
Institut de Recherche en Mathématique et Physique\hfill\break\indent
Chemin du cyclotron 2, L7.01.02\hfill\break\indent
1348 Louvain-la-Neuve, Belgium}
\email{Augusto.Ponce@uclouvain.be}
\author{Joan Verdera}
\address{
Joan Verdera\hfill\break\indent
Universitat Autònoma de Barcelona\hfill\break\indent
Departament de Matemàtiques\hfill\break\indent 
and\hfill\break\indent
Centre de Recerca Matemàtica\hfill\break\indent
08193 Bellaterra, Barcelona, Catalonia}
\email{jvm@mat.uab.cat}
\begin{document}

\begin{abstract}
Given a finite nonnegative Borel measure \(m\) in \(\mathbb{R}^{d}\), we identify the Lebesgue set \(\mathcal{L}(V_{s}) \subset \mathbb{R}^{d}\) of the vector-valued function
\[
V_{s}(x)
= \int_{\mathbb{R}^{d}}\frac{x - y}{|x - y|^{s + 1}} \mathrm{d}m(y),
\]
for any order \(0 < s < d\).{}
We prove that \(a \in \mathcal{L}(V_{s})\) if and only if the integral above has a principal value at \(a\) and
\[
\lim_{r \to 0}{\frac{m(B_{r}(a))}{r^{s}}} = 0.
\]
In that case, the precise representative of \(V_{s}\) at \(a\) coincides with the principal value of the integral. 
We also study the existence of Lebesgue points for the Cauchy integral of the intrinsic probability measure associated with planar Cantor sets, which leads to challenging new questions.
\end{abstract}

\subjclass[2010]{31B15, 30E20 (primary); 26B05 (secondary)}


\maketitle

\section{Introduction}

Given \(0 < s < d\) and a finite nonnegative Borel measure \(m\) in \(\R^{d}\), we consider in \(\R^{d}\) the vector-valued function
\begin{equation}
\label{eqRiezDerivative}
V_{s}(x) 
\vcentcolon= \int_{\R^{d}} \frac{x - y}{\abs{x - y}^{s + 1}} \dif m(y) 
\end{equation}
that is well-defined for every point \(x\) in the set
\[{}
\Dom{V_{s}}
\vcentcolon=  \bigg\{ x \in \R^{d} : \int_{\R^{d}} \frac{\dif m(y)}{\abs{x - y}^{s}} < \infty \bigg\}.
\] 
The domain \(\Dom{V_{s}}\) is rather large as its complement \(\R^{d} \setminus \Dom{V_{s}}\) is negligible with respect to the Lebesgue measure.
Indeed, as a consequence of Fubini's theorem one has
\[{}
\int_{\R^{d}} \biggl( \int_{\R^{d}} \frac{\dif m(y)}{\abs{x - y}^{s}}  \biggr) \e^{-\abs{x}^{2}} \dif x 
< \infty.
\]
A finer analysis shows that the Hausdorff dimension of  \(\R^{d} \setminus \Dom{V_{s}}\) is actually
not greater that \(s\)\,; see Theorem~1 in Chapter~4 of \cite{Carleson}.

Up to multiplicative constants, \(V_{s}\) is the gradient of the Riesz potential of order \(s - 1\) and, in particular, \(V_{d - 1}\) is the gradient of the Newtonian potential
\[{}
\int_{\R^{d}}  \frac{\dif m(y)}{\abs{x - y}^{d-2}} \quad \text{for \(x \in \R^d\)}
\]
generated by a distribution of mass described by \(m\).{} 
Hence, for every function \(u \in L^{1}\loc(\R^{d})\) such that
\begin{equation}
\label{eq-232}
- \Delta u = m
\quad \text{in the sense of distributions in \(\R^{d}\),}
\end{equation}
by Weyl's lemma on weakly harmonic functions it follows that
\[{}
\nabla u = c V_{d - 1} + \Phi{}
\quad \text{almost everywhere in \(\R^{d}\),}
\]
where \(\Phi : \R^{d} \to \R^{d}\) is the gradient of a harmonic function and \(c \in \R\) is a nonzero constant that depends on \(d\).
As a result, a reasonable pointwise definition of \(\nabla u\) for any solution of \eqref{eq-232} can be obtained from a
good understanding of \(V_{d - 1}\).

More generally, we are interested in the existence of the \emph{precise representative} of \(V_{s}\) at a given point \(a \in \R^{d}\).{}
We recall that the precise representative in this case is a vector \(\alpha \in \R^{d}\) that satisfies
\begin{equation}
\label{eq-226}
\lim_{r \to 0}{\fint_{B_{r}(a)}{\abs{V_{s} - \alpha}}} 
= 0,
\end{equation}
where the symbol \(\fint\) stands for the average integral with respect to the $d$-dimensional Lebesgue measure and $B_r(a)$
for the ball of radius $r$ centered at $a$.
When such an \(\alpha\) exists, we say that \(a\) is a \emph{Lebesgue point} of \(V_{s}\) and we denote 
\[{}
\Lebesgue{V_{s}}(a)
\vcentcolon= \alpha.
\]
It follows from the fundamental property \eqref{eq-226} that
\[{}
\Lebesgue{V_{s}}(a)
= \lim_{r \to 0}{\fint_{B_{r}(a)}{V_{s}}}.
\]
As a consequence of the classical Lebesgue Differentiation Theorem, \(\Lebesgue{V_{s}}(a)\) exists for almost every \(a \in \R^{d}\) with respect to the Lebesgue measure and
\[{}
\Lebesgue{V_{s}} = V_{s}
\quad \text{almost everywhere in \(\R^{d}\).}
\]

When the function one is dealing with has better properties, like being in some Sobolev space, 
the exceptional set \(\mathcal{E}\) (that is, the complement of the set \(\mathcal{L}\) of Lebesgue points) is typically smaller
than merely having zero Lebesgue measure; see e.g.~Section~4.8 in \cite{Evans-Gariepy} and Chapter~8 in \cite{Ponce:2016}.
It is then natural to expect that the same property holds for the exceptional set \(\mathcal{E}(V_{s})\) of  
the potential \(V_{s}\).

This note stems from the recent work~\cite{Cufi-Verdera} by the first and third authors concerning capacitary differentiability of the Newtonian potential; see also \cite{Verdera}.
In analogy with \cite{Cufi-Verdera}, we show that existence of a principal value for the integral, combined with a density property of \(m\) at \(a\), allows one to decide whether \(a\) is a Lebesgue point of \(V_{s}\).

Before stating our result, we recall that the principal value of \(V_{s}\) at \(a\) is
\[{}
\mathrm{p.v.}\,V_{s}(a)
\vcentcolon= 
\lim_{\epsilon \to 0}{\int_{\R^{d} \setminus B_{\epsilon}(a)} \frac{a - y}{\abs{a - y}^{s + 1}} \dif m(y)}
\]
whenever this limit exists.
We relate the question of existence of a principal value for \(V_{s}\) with that of a precise representative in the following

\begin{theorem}
	\label{theoremRieszLebesgue}
	A point \(a \in \R^{d}\) is a Lebesgue point of \(V_{s}\)
	if and only if the principal value of \(V_{s}\) exists at \(a\) and
	\begin{equation}
	\label{eq-328}
	\lim_{r \to 0}{\frac{m(B_{r}(a))}{r^{s}}} = 0.
	\end{equation}
	One then has
	\[{}
	\Lebesgue{V_{s}}(a)
	= \mathrm{p.v.}\,V_{s}(a).
	\]
\end{theorem}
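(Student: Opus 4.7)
I will prove both directions separately; throughout take $a = 0$.

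\emph{Sufficiency.} Assume $\mathrm{p.v.}\,V_{s}(0)$ exists and $m(B_{r}(0))/r^{s} \to 0$. Decompose $V_{s}(x) = N(x) + F(x)$ with $N(x) := \int_{B_{2r}(0)}\frac{x-y}{\abs{x-y}^{s+1}}\dif m(y)$ and $F$ the complementary integral. Fubini together with the uniform spherical bound $\fint_{B_{r}(0)}\abs{x-y}^{-s}\dif x \le C/r^{s}$ gives $\fint_{B_{r}(0)}\abs{N}\dif x \le Cm(B_{2r}(0))/r^{s} = o(1)$. For the far part, write
\[
F(x) - \mathrm{p.v.}\,V_{s}(0) = \bigl[F(0) - \mathrm{p.v.}\,V_{s}(0)\bigr] + \bigl[F(x) - F(0)\bigr].
\]
The first bracket is $o(1)$ by the principal value definition, while in the second, since $\abs{x} \le r \le \abs{y}/2$, the mean-value theorem applied to $z \mapsto z\abs{z}^{-s-1}$ bounds the integrand by $Cr\abs{y}^{-s-1}$, and a dyadic decomposition of $\{\abs{y}>2r\}$ combined with the density condition yields $Cr\int_{\abs{y}>2r}\abs{y}^{-s-1}\dif m(y) = o(1)$.

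\emph{Necessity.} Assume $0$ is a Lebesgue point with value $\alpha$. For the density condition, argue by contradiction: suppose $m(B_{r_{n}}(0))/r_{n}^{s} \ge \eta > 0$ along some $r_{n} \to 0$, and choose the scales so that the rescaled measures $\mu_{n}(A) := r_{n}^{-s}m(r_{n}A)$ are locally uniformly bounded outside $B_{1}$. The rescaling identity $V_{s}^{\mu_{n}}(y) = V_{s}(r_{n}y)$ together with the Lebesgue point property yields $V_{s}^{\mu_{n}} \to \alpha$ in $L^{1}_{\mathrm{loc}}(\R^{d})$, while $\mu_{n}(B_{1}) \ge \eta$. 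After extracting a weak-$*$ subsequential limit $\mu$ with $\mu(\overline{B_{1}}) \ge \eta$ (handling the escape of mass to infinity by a cutoff argument), one identifies $V_{s}^{\mu} = \alpha$ on $\R^{d}$. The distributional identity $\operatorname{div}_{x}[(x-y)/\abs{x-y}^{s+1}] = (d-s-1)\abs{x-y}^{-s-1}$ for $x \ne y$ (supplemented by a Dirac mass at $y$ when $s = d - 1$) combined with the strict sign of the Riesz kernel forces $\mu \equiv 0$, contradicting $\mu(\overline{B_{1}}) \ge \eta$. For the principal value, Fubini rewrites $\fint_{B_{r}(0)}V_{s}\dif x = \int K_{r}(y)\dif m(y)$ with $K_{r}(y) := \fint_{B_{r}(0)}(x-y)\abs{x-y}^{-s-1}\dif x$; a Taylor expansion gives $K_{r}(y) = -y\abs{y}^{-s-1} + O(r^{2}\abs{y}^{-s-2})$ for $\abs{y} \ge 2r$, and combined with the uniform bound $\abs{K_{r}(y)} \le C/r^{s}$, arguments analogous to the sufficient direction (now using the established density condition) show $\int_{\abs{y}>2r}(-y)\abs{y}^{-s-1}\dif m(y) \to \alpha$ as $r \to 0$, hence $F_{\epsilon}(0) \to \alpha$ as $\epsilon \to 0$, i.e., the principal value exists and equals $\alpha$.

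The hardest step is the density condition in the necessity direction. The technical subtleties are: (i) the rescaled measures $\mu_{n}$ have $\mu_{n}(\R^{d}) = m(\R^{d})/r_{n}^{s} \to \infty$, so the scales $r_{n}$ must be selected carefully to guarantee local uniform bounds sufficient for weak-$*$ extraction; (ii) the convergence $V_{s}^{\mu_{n}} \to V_{s}^{\mu}$ in $L^{1}_{\mathrm{loc}}$ requires controlling the escaping mass via a truncation argument. For the range $d - 1 < s < d$, the distributional divergence involves a Hadamard finite part whose interpretation requires a Fourier-side argument; the case $0 < s < d - 1$ is cleaner since the divergence is a locally integrable positive function, and $s = d - 1$ is the classical Newtonian case where the divergence is a positive multiple of $\mu$.
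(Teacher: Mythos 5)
Your sufficiency direction is essentially the paper's argument: decompose $V_s$ into near and far parts across $B_{2r}(a)$, estimate the near part by Fubini and the spherical bound (this is Lemma~\ref{lemmaEstimateAverage}), estimate the far part by the mean value theorem and Cavalieri/dyadics (Lemma~\ref{lemmaEstimateMeanValue}), and pass to the limit via the elementary Lemma~\ref{lemmaLimit}. The final step of your necessity argument, recovering the principal value once the density condition is known, is also the paper's (it reuses estimate~\eqref{eq-334}). So far so good, modulo small inaccuracies (your claimed error $O(r^2|y|^{-s-2})$ in the Taylor expansion of $K_r$ should be $O(r^2|y|^{-s-3})$; a simple first-order MVT bound $O(r|y|^{-s-1})$ already suffices).

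The gap is in your proof of the density condition. Your blow-up scheme with $\mu_n = r_n^{-s}m(r_n\cdot)$ has a chicken-and-egg problem: weak-$*$ compactness of $\mu_n$ on compacta requires $\sup_n m(B_{Rr_n}(a))/r_n^s < \infty$ for every $R$, which is essentially the very density bound you are trying to establish, and does not follow from the Lebesgue point hypothesis alone without some prior estimate. You explicitly flag both this issue (your item (i)) and the escape-of-mass problem for passing $V_s^{\mu_n}\to V_s^\mu$ (your item (ii)) but resolve neither, and the Hadamard finite-part divergence for $d-1<s<d$ is likewise only gestured at. The paper avoids all of this by a direct quantitative estimate: Proposition~\ref{lemmaEstimateDensity} tests $V_s-\alpha$ against the radial field $x/|x|$ and the divergence theorem to get $m(B_r(a))/r^s\le C\fint_{B_{2r}(a)}|V_s-\alpha|$ when $0<s\le d-1$; Proposition~\ref{lemmaEstimateDensityNew} uses a Fourier-constructed vector field $\Psi$ (Lemma~\ref{lemmaInversion}) whose Riesz transform is a fixed bump $\varphi\ge\chi_{B_1}$ to handle $d-1<s<d$, where the extra tail term is controlled by Lemma~\ref{lemmaComputation}. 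That direct pairing $m(B_r)\le\int_{\R^d}|V_s-\alpha|\,|\Psi_r|$ is precisely the information one would hope to extract after passing to a blow-up limit, but obtained without any compactness machinery; unless you can supply the uniform local mass bounds for $\mu_n$ a priori, your contradiction argument does not close.
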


	We prove Theorem~\ref{theoremRieszLebesgue} in Sections~\ref{sectionConverse} and~\ref{sectionDirect}.
	That every \(a \in \Dom{V_{s}}\) is a Lebesgue point of \(V_{s}\) and \(\Lebesgue{V_{s}}(a) = V_{s}(a)\) can be seen independently from Theorem~\ref{theoremRieszLebesgue} by observing that, for every \(r > 0\) and \(y \in \R^{d}\) with \(y \ne a\),	
	\[{}
	 \fint_{B_{r}(a)} \frac{\dif x}{|x - y|^{s}}
	 \le \frac{C_{d, s}}{|a - y|^{s}}
	\]
	and then
	\[{}
	\fint_{B_{r}(a)} \biggabs{\frac{x-y}{|x-y|^{s+1}} - \frac{a - y}{|a - y|^{s+1}}} \dif x
	\le \frac{C_{d, s} + 1}{\abs{a - y}^{s}}
	\quad \text{for \(y \ne a\).}
	\]
	An application of Fubini's Theorem and the Dominated Convergence Theorem now gives
	\[{}
	\fint_{B_{r}(a)}{\abs{V_{s} - V_{s}(a)}}
	\le \int_{\R^{d}} \biggl( \fint_{B_{r}(a)} \biggabs{\frac{x-y}{|x-y|^{s+1}} - \frac{a - y}{|a - y|^{s+1}}} \dif x \biggr) \dif m(y)
	\to 0
	\quad \text{as \(r \to 0\).}
	\]
	As \(\Dom{V_{s}}\) is contained in the set of Lebesgue points of \(V_{s}\), we get
	\begin{equation}
	\label{eqHausdorffDimension}
	\dim_{\mathcal{H}}{(\mathcal{E}(V_{s}))} \le{}
	\dim_{\mathcal{H}}{(\R^{d} \setminus \Dom{V_{s}})} \le s.
	\end{equation}
	
	To obtain a more precise quantification of the size of  \(\mathcal{E}(V_{s})\), we observe that, by Theorem~\ref{theoremRieszLebesgue}, failure of having Lebesgue points may occur either when the principal value of \(V_{s}\) does not exist at \(a\) or
	\[{}
	\limsup_{r \to 0}{\frac{m(B_{r}(a))}{r^{s}}} > 0.
	\]
	The latter condition holds on a set of points \(a\) with at most \(\sigma\)-finite Hausdorff measure \(\mathcal{H}^{s}\).{}

	The question of existence of principal values is more subtle and has been investigated by Mattila and the third author~\cite{Mattila-Verdera}.
	The answer involves a capacity \(\kappa_{s}\) that is defined on every compact subset \(E \subset \R^{d}\) as 
	\[{}
	{\kappa_{s}(E) = \sup{\mu(\R^{d})},}
	\]
	where the supremum is computed over all finite nonnegative Borel measures \(\mu\) supported in \(E\) such that
	\begin{enumerate}[(a)]
		\item \(\mu(A) = 0\) for every Borel set \(A \subset \R^{d}\) with \(\sigma\)-finite \(\cH^{s}\) measure,{}
		\item the maximal Riesz transform \(R_{s}^{*}\) is a bounded operator from \(L^{2}(\mu)\) into itself with
		\[{}
		\norm{R_{s}^{*}(f\mu)}_{L^{2}(\mu)}
		\le \norm{f}_{L^{2}(\mu)}
		\quad \text{for every \(f \in (L^{1} \cap L^{2})(\mu)\)},
		\] 
		where 
	\[{}
	R^{*}_{s}(f\mu)(x) \vcentcolon= \sup_{\epsilon > 0}{\biggl|\int_{\R^{d} \setminus B_{\epsilon}(x)} \frac{x - y}{\abs{x - y}^{s + 1}} f(y) \dif \mu(y) \biggr|}
	\quad \text{for every \(x \in \R^{d}\).}
	\]
	\end{enumerate}
	
	As a consequence of~\cite{Mattila-Verdera} and using a straightforward adaptation of the argument in \cite{Cufi-Verdera}, we show that
	
\begin{theorem}
	\label{theoremCapacity}
	For every compact subset \(E \subset \mathcal{E}(V_{s})\), one has
	\(
	\kappa_{s}(E) = 0.
	\)
\end{theorem}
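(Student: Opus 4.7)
The plan is to argue by contradiction. Suppose there exists a compact set \(E \subset \mathcal{E}(V_{s})\) with \(\kappa_{s}(E) > 0\). By the very definition of \(\kappa_{s}\), one can then find a finite nonnegative Borel measure \(\mu\) supported in \(E\) with \(\mu(\R^{d}) > 0\) satisfying properties (a) and (b). The aim is to show that, under these assumptions, \(\mu\)-almost every \(x \in E\) is a Lebesgue point of \(V_{s}\), which contradicts \(E \subset \mathcal{E}(V_{s})\) and forces \(\kappa_{s}(E) = 0\).

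The first step addresses the density condition \eqref{eq-328}. By a classical covering argument, the set
\[
F \vcentcolon= \Bigl\{ x \in \R^{d} : \limsup_{r \to 0}{\frac{m(B_{r}(x))}{r^{s}}} > 0 \Bigr\}
\]
has \(\sigma\)-finite Hausdorff measure \(\cH^{s}\). Property~(a) then yields \(\mu(F) = 0\), so \eqref{eq-328} holds at \(\mu\)-almost every point of \(E\).

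The second step invokes the theorem of Mattila and Verdera~\cite{Mattila-Verdera}: the \(L^{2}(\mu)\)-boundedness of the maximal Riesz transform \(R_{s}^{*}\) provided by property~(b), combined with the \(\mu\)-a.e.\ vanishing of the \(s\)-dimensional density of \(m\) established in the first step, implies that \(\mathrm{p.v.}\,V_{s}(x)\) exists for \(\mu\)-almost every \(x\). Putting the two steps together and invoking Theorem~\ref{theoremRieszLebesgue}, \(\mu\)-almost every \(x \in E\) is a Lebesgue point of \(V_{s}\), which is the desired contradiction.

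I expect the main technical obstacle to be the precise application of~\cite{Mattila-Verdera}. That reference, as originally formulated, gives \(\mu\)-a.e.\ existence of principal values for the Riesz transform of measures of the form \(f\mu\) with \(f \in L^{2}(\mu)\), whereas here one needs the corresponding conclusion for the Riesz transform of the external measure \(m\), which is in general mutually singular with respect to \(\mu\). The ``straightforward adaptation of the argument in \cite{Cufi-Verdera}'' presumably bridges this gap by decomposing \(m\) according to its local \(s\)-density with respect to \(\mu\), using the uniform smallness of \(m(B_{r}(x))/r^{s}\) from the first step to compare, on small balls, the truncated singular integrals of \(m\) with those against suitable \(L^{2}(\mu)\) densities. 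This reduction is the only nontrivial ingredient; the rest of the argument is the contradiction scheme above.
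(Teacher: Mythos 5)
Your overall scheme—take an admissible measure \(\mu\) for \(\kappa_s(E)\), use property (a) to kill the set where the \(s\)-density of \(m\) is positive, and use Mattila--Verdera with property (b) to get \(\mu\)-a.e.\ existence of \(\mathrm{p.v.}\,V_s\), then conclude via Theorem~\ref{theoremRieszLebesgue}—is exactly the paper's argument. However, there are two issues in your second step.

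First, you feed the wrong density hypothesis into Mattila--Verdera. Theorem~1.6 of \cite{Mattila-Verdera} requires the \(s\)-dimensional density of \emph{the underlying measure \(\mu\)} (the one on which \(R_s^*\) is \(L^2\)-bounded) to vanish \(\mu\)-almost everywhere, i.e.\ \eqref{eq-692}; the behaviour of \(m\) plays no role in the hypotheses of that theorem. Your first step only gives the vanishing of \(m(B_r(x))/r^s\) for \(\mu\)-a.e.\ \(x\), which is what is needed to apply Theorem~\ref{theoremRieszLebesgue} afterwards but not what Mattila--Verdera asks for. To obtain \eqref{eq-692} from the admissibility assumption (a), one needs the separate measure-theoretic argument that the paper isolates as Proposition~\ref{propositionMattilaVerdera}: since \(\mu\) does not charge sets of \(\sigma\)-finite \(\cH^s\) measure, one extracts (via a result of \cite{Ponce}) a compact set \(K\) carrying almost all of \(\mu\) on which \(\mu\lfloor_K\) has uniformly vanishing \(s\)-density, and then upgrades this to the density of \(\mu\) itself by Besicovitch differentiation. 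Without that step, Mattila--Verdera is not applicable.

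Second, your worry about passing from the transform of \(f\mu\) to the transform of the external measure \(m\) is based on a misreading of \cite{Mattila-Verdera}: Theorem~1.6 there is precisely a statement about general measures, giving \(\mu\)-a.e.\ existence of the principal value of \(\int \frac{x-y}{\abs{x-y}^{s+1}}\dif\nu(y)\) for \emph{any} finite Radon measure \(\nu\), under the hypotheses \eqref{eq-692} and \eqref{eq-698} on \(\mu\). So the conclusion for \(V_s\) is immediate once the hypotheses on \(\mu\) are verified; no decomposition of \(m\) according to local densities, and no comparison of truncated integrals, is needed. What you flagged as ``the only nontrivial ingredient'' is in fact a non-issue, whereas the genuinely nontrivial step (Proposition~\ref{propositionMattilaVerdera}) is the one your write-up skips.
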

	
	Other properties of \(\kappa_{s}\) for any \(0 < s < d\) 
	are presented in Section~\ref{sectionCapacity}, where we also prove Theorem~\ref{theoremCapacity} and explain why the latter implies \eqref{eqHausdorffDimension}.
	A variant of $\kappa_s$ was introduced by Prat in \cite{Prat}, where the author obtains analogous estimates for the Hausdorff dimension of sets of zero capacity via comparison between capacity and Hausdorff content.

	A deep result by Ruiz~de~Villa and Tolsa~\cite{RuizdeVilla-Tolsa} ensures that \(\kappa_{d - 1}\) is equivalent to the \(C^{1}\)-harmonic capacity.
	Using this interpretation, the fact that \(\kappa_{d - 1}(E) = 0\) means that every \(C^{1}\)~function in \(\R^{d}\) that is 
	harmonic in \(\R^{d} \setminus E\) must be harmonic in the entire space \(\R^{d}\).
	In dimension \(d = 2\), it follows from work by Tolsa~\cite{Tolsa:2004} that the $C^1$-harmonic capacity is equivalent to the classical continuous analytic capacity.
	We then deduce from Theorem~\ref{theoremCapacity} for \(s = 1\) that
	
\begin{corollary}
	For every finite complex Borel measure \(\nu\) in \(\mathbb{C}\), the exceptional set for Lebesgue points of the Cauchy integral of \(\nu\), defined by convolution as
$$
C(\nu)(z) = \int_{\mathbb{C}} \frac{\dif\nu(w)}{z-w}{}
\quad \text{for almost every \(z \in \mathbb{C}\),}
$$
has zero continuous analytic capacity.
\end{corollary}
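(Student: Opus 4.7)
The plan is to reduce the corollary to Theorem~\ref{theoremCapacity} in the case \(d = 2\), \(s = 1\). Using the Jordan decomposition of the real and imaginary parts of \(\nu\), I would first write
\[
\nu = \mu_{1} - \mu_{2} + i(\mu_{3} - \mu_{4}),
\]
with each \(\mu_{j}\) a finite nonnegative Borel measure in \(\mathbb{C}\). Identifying \(\mathbb{C}\) with \(\R^{2}\) via \(z - w = (x_{1} - y_{1}) + i(x_{2} - y_{2})\), the Cauchy kernel splits into
\[
\frac{1}{z - w}
= \frac{x_{1} - y_{1}}{\abs{x - y}^{2}} - i\,\frac{x_{2} - y_{2}}{\abs{x - y}^{2}},
\]
so that the Cauchy integral of \(\nu\), which I shall denote \(C\nu\), is a complex linear combination of the two components of the four vector-valued potentials \(V_{1}^{(j)}\) from \eqref{eqRiezDerivative} associated with the measures \(\mu_{j}\). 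By linearity, every common Lebesgue point of the four \(V_{1}^{(j)}\) is a Lebesgue point of \(C\nu\), so
\[
\mathcal{E}(C\nu) \subset \bigcup_{j = 1}^{4} \mathcal{E}(V_{1}^{(j)}).
\]

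I would next apply Theorem~\ref{theoremCapacity} to each measure \(\mu_{j}\) to obtain \(\kappa_{1}(F) = 0\) for every compact \(F \subset \mathcal{E}(V_{1}^{(j)})\). The equivalence of \(\kappa_{1}\) with the \(C^{1}\)-harmonic capacity due to Ruiz~de~Villa and Tolsa~\cite{RuizdeVilla-Tolsa}, together with the planar identification of the \(C^{1}\)-harmonic capacity with the continuous analytic capacity \(\alpha\) due to Tolsa~\cite{Tolsa:2004}, both recalled in the paragraphs preceding the corollary, then upgrade this into \(\alpha(F) = 0\) for every such compact~\(F\).

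To conclude, I would use that each \(V_{1}^{(j)}\) is Borel measurable, so that \(\mathcal{E}(V_{1}^{(j)})\) is a Borel set, and inner regularity of \(\alpha\) on Borel sets converts the compact-set statement into \(\alpha(\mathcal{E}(V_{1}^{(j)})) = 0\) for every~\(j\). Countable semiadditivity of continuous analytic capacity, which is a consequence of Tolsa's semiadditivity theorem for analytic capacity, then implies \(\alpha(\mathcal{E}(C\nu)) = 0\). The main delicate point is precisely this last step: a compact subset of the fourfold union need not sit inside a single \(\mathcal{E}(V_{1}^{(j)})\), so one cannot simply invoke Theorem~\ref{theoremCapacity} on an arbitrary compact subset of \(\mathcal{E}(C\nu)\), and the semiadditivity property of \(\alpha\) is genuinely needed in order to combine the four pieces.
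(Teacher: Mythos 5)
Your argument is correct, and since the paper states the corollary without any explicit proof (it is presented as a direct deduction from Theorem~\ref{theoremCapacity} for \(s = 1\) together with the two cited equivalences of capacities), your proposal in effect supplies the missing details. The two ingredients you single out — the decomposition \(\nu = \mu_{1} - \mu_{2} + i(\mu_{3} - \mu_{4})\) into four finite nonnegative measures, and the identification of the Cauchy kernel with a complex linear combination of the two components of the Riesz kernel of order \(s = 1\) in \(\R^{2}\) — are exactly what is needed to bring the complex-measure statement under the scope of Theorem~\ref{theoremCapacity}, which is proved only for a single nonnegative measure \(m\) (indeed Theorem~\ref{theoremRieszLebesgue}, and in particular the density condition \eqref{eq-328}, genuinely uses nonnegativity). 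You are also right to flag the final step as the delicate one: a compact set \(K \subset \mathcal{E}(C\nu) \subset \bigcup_{j}\mathcal{E}(V_{1}^{(j)})\) need not lie in any single \(\mathcal{E}(V_{1}^{(j)})\), so passing from the compact-set statement of Theorem~\ref{theoremCapacity} to the full union requires that the exceptional sets be Borel, that continuous analytic capacity be inner regular on Borel sets, and that it be (finitely, here) semiadditive — which is precisely the non-trivial content of Tolsa's theorem already invoked in the paper for the comparison with the \(C^1\)-harmonic capacity. The paper glosses over this point, so it is worth having spelled out that the semiadditivity is doing genuine work and is not merely a formal convenience.
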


The preceding corollary is sharp in the sense that there exists a compact set of zero continuous analytic capacity that is exactly the exceptional set of the Cauchy integral of some finite measure. 
For instance, take as $E$ the corner quarters planar Cantor set, which has positive and finite one dimensional Hausdorff measure $m$.
Then, the Cauchy integral $C(m)$ is continuous in the complement of $E$ and each point of $E$ is an exceptional point of $C(m)$ because \eqref{eq-328} is not satisfied; see Proposition~\ref{propositionGI}.
It would be interesting to know whether the same property holds for every compact set with zero continuous analytic capacity:

\begin{openproblem}
	Given a compact set \(E \subset \CC\) with zero continuous analytic capacity, does there exist a finite measure \(\nu\) such that the set of non-Lebesgue points of \(C(\nu)\) coincides with \(E\)\,?
\end{openproblem}

In Sections~\ref{sectionCantor} and~\ref{sectionCantorProof}, we show that, for Cantor sets \(E \subset \CC\) satisfying \eqref{eq-328} with \(s = 1\) and equipped with their intrinsic probability measure \(\mu\), the Cauchy integral \(C(\mu)\) always has some Lebesgue point in \(E\)\,; see Theorem~\ref{theoremCantorPV}. 
Hence, these sets \(E\) cannot be identified as the exceptional set of \(C(\mu)\) for such measures \(\mu\), but we do not rule out the existence of another measure \(\nu\) with the desired property.

\section{Proof of the reverse implication of Theorem~\ref{theoremRieszLebesgue}}
\label{sectionConverse}

We first need a couple of estimates related to \(V_{s}\).

\begin{lemma}
	\label{lemmaEstimateAverage}
	For every \(a \in \R^{d}\) and every \(r > 0\),{}
	\[{}
	\fint_{B_{r}(a)} \biggl( \int_{B_{2r}(a)} \frac{ \dif m(y)}{\abs{z - y}^{s}} \biggr) \dif z
	\le C \, \frac{m(B_{2r}(a))}{r^{s}},
	\]
	for some constant \(C > 0\) depending on \(d\).
\end{lemma}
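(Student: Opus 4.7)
The plan is to apply Fubini's theorem to swap the order of integration and then bound the resulting inner average by a direct geometric computation. After swapping, the left-hand side becomes
\[
\int_{B_{2r}(a)} \biggl( \fint_{B_{r}(a)} \frac{\dif z}{\abs{z-y}^{s}} \biggr) \dif m(y),
\]
so the task reduces to showing that the inner average is bounded by $C r^{-s}$ uniformly in $y \in B_{2r}(a)$.

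To obtain this uniform bound, I would use the triangle inequality to observe that for every $y \in B_{2r}(a)$ the ball $B_{r}(a)$ is contained in $B_{3r}(y)$. Enlarging the domain of integration and then passing to polar coordinates centered at $y$, one gets
\[
\int_{B_{r}(a)} \frac{\dif z}{\abs{z-y}^{s}} \le \int_{B_{3r}(y)} \frac{\dif z}{\abs{z-y}^{s}} = \frac{\sigma_{d-1}}{d-s}(3r)^{d-s},
\]
where $\sigma_{d-1}$ denotes the surface area of the unit sphere in $\R^{d}$ and where the integral converges at the origin precisely because $s < d$. Dividing by $\abs{B_{r}(a)} = \omega_{d} r^{d}$ yields the desired uniform bound with a constant depending on $d$ and $s$. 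Integrating this bound against $m$ over $B_{2r}(a)$ then produces the right-hand side $C \, m(B_{2r}(a))/r^{s}$.

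This is a short routine estimate and I do not foresee any serious obstacle. The only minor remark is that the constant genuinely depends on $s$ as well, through the factor $1/(d-s)$; the statement of the lemma presumably treats $s$ as fixed throughout the paper, so that the dependence is absorbed.
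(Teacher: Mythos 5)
Your proposal is correct and follows the same overall strategy as the paper: apply Fubini and then bound the inner average \(\fint_{B_{r}(a)} \abs{z-y}^{-s}\,\dif z\) by \(C r^{-s}\). The only variation is in how that inner average is bounded: you enlarge the domain to \(B_{3r}(y)\) using the inclusion \(B_{r}(a) \subset B_{3r}(y)\) valid for \(y \in B_{2r}(a)\), whereas the paper invokes the rearrangement-type observation that \(\fint_{B_{r}(a)} \abs{z-y}^{-s}\,\dif z \le \fint_{B_{r}(a)} \abs{z-a}^{-s}\,\dif z\) for every \(y\); both are routine, and your remark that the constant in fact also depends on \(s\) through the factor \(1/(d-s)\) applies equally to the paper's computation.
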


\begin{proof}
	By Fubini's theorem,
	\[{}
	\fint_{B_{r}(a)} \biggl( \int_{B_{2r}(a)} \frac{ \dif m(y)}{\abs{z - y}^{s}} \biggr) \dif z
	=
	\int_{B_{2r}(a)} \biggl( \fint_{B_{r}(a)} \frac{ \dif z}{\abs{z - y}^{s}} \biggr) \dif m(y).
	\]
	For every \(y \in \R^{d}\),
	\[{}
	\fint_{B_{r}(a)} \frac{ \dif z}{\abs{z - y}^{s}}
	\le \fint_{B_{r}(a)} \frac{ \dif z}{\abs{z - a}^{s}}
	= \frac{C}{r^{s}},
	\]
	which gives the conclusion.
\end{proof}

\begin{lemma}
	\label{lemmaEstimateMeanValue}
	For every \(a \in \R^{d}\) and every \(r > 0\),{}
	\[{}
	\fint_{B_{r}(a)}{\biggl( \int_{\R^{d} \setminus B_{2r}(a)} \biggabs{\frac{z - y}{\abs{z - y}^{s + 1}} - \frac{a - y}{\abs{a - y}^{s + 1}}} \dif m(y)}\biggr) \dif z
	\le C' r \int_{2r}^{\infty} \frac{m(B_{t}(a))}{t^{s + 2}} \dif t,
	\]	
	for some constant \(C' > 0\) depending on \(d\).
\end{lemma}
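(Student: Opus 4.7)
The plan is to control the integrand pointwise by a smoothness estimate for the kernel, integrate out the $z$-average to gain a factor of $r$, and then convert the resulting integral over $\R^{d} \setminus B_{2r}(a)$ into a radial integral using Fubini.

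First I would view the map $F_{y}(x) \vcentcolon= (x-y)/\abs{x-y}^{s+1}$ as a $C^{1}$ vector field away from $y$ and compute that its Jacobian satisfies $\abs{DF_{y}(x)} \le C/\abs{x-y}^{s+1}$ for a constant $C$ depending only on $d$ and $s$. For $z \in B_{r}(a)$ and $y \in \R^{d} \setminus B_{2r}(a)$, every point on the segment $[a,z]$ lies within distance $r \le \abs{a-y}/2$ of $a$, and therefore at distance at least $\abs{a-y}/2$ from $y$. The mean value inequality then yields
\[{}
\biggabs{\frac{z-y}{\abs{z-y}^{s+1}} - \frac{a-y}{\abs{a-y}^{s+1}}}
\le \frac{C_{1}\,\abs{z-a}}{\abs{a-y}^{s+1}}.
\]

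Next I would integrate this over $z \in B_{r}(a)$. Using $\abs{z-a} \le r$ and taking the average,
\[{}
\fint_{B_{r}(a)} \biggabs{\frac{z-y}{\abs{z-y}^{s+1}} - \frac{a-y}{\abs{a-y}^{s+1}}} \dif z
\le \frac{C_{1}\,r}{\abs{a-y}^{s+1}}.
\]
By Fubini's theorem, the left-hand side of the lemma is thus bounded by
\[{}
C_{1}\, r \int_{\R^{d}\setminus B_{2r}(a)} \frac{\dif m(y)}{\abs{a-y}^{s+1}}.
\]

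Finally, to recognize the radial integral on the right-hand side, I would use the layer-cake identity $\abs{a-y}^{-(s+1)} = (s+1)\int_{\abs{a-y}}^{\infty} t^{-(s+2)}\dif t$ together with Fubini. Since for $y \in \R^{d} \setminus B_{2r}(a)$ one has $\abs{a-y} > 2r$, this rewrites the last integral as
\[{}
(s+1)\int_{2r}^{\infty} \frac{m\bigl(\{y : 2r < \abs{a-y} \le t\}\bigr)}{t^{s+2}}\dif t
\le (s+1)\int_{2r}^{\infty} \frac{m(B_{t}(a))}{t^{s+2}}\dif t,
\]
which gives the claim with $C' = C_{1}(s+1)$.

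The main, essentially only, technical point is the pointwise gradient bound for $F_{y}$ together with the geometric observation that the segment from $a$ to $z$ stays bounded away from $y$ by half the distance $\abs{a-y}$; everything after that is bookkeeping with Fubini and a layer-cake rewriting.
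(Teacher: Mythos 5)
Your proof is correct and follows essentially the same route as the paper's: a mean value estimate for the kernel $(x-y)/\abs{x-y}^{s+1}$ along the segment from $a$ to $z$, using that this segment stays at distance at least $\abs{a-y}/2$ from $y$, followed by averaging in $z$ and a Cavalieri/layer-cake identity to pass to the radial integral. The only cosmetic difference is that the paper invokes Cavalieri's principle directly to get the exact identity $\int_{\R^d\setminus B_{2r}(a)}\abs{a-y}^{-(s+1)}\dif m(y) = (s+1)\int_{2r}^\infty m(B_t(a))\,t^{-(s+2)}\dif t$, whereas you bound $m(\{y: 2r < \abs{a-y}\le t\})$ by $m(B_t(a))$; both give the stated inequality.
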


\begin{proof}
	To simplify the notation, we may assume that \(a = 0\).
	For \(y \in \R^{d} \setminus B_{2r}(0)\) and \(z \in B_{r}(0)\), by the Mean Value Theorem there exists 
	\(0 \le \theta \le 1\) such that
	\[{}
	\biggabs{\frac{z - y}{\abs{z - y}^{s + 1}} + \frac{y}{\abs{y}^{s + 1}}}
	\le \frac{\C \abs{z}}{\abs{\theta z - y}^{s + 1}} 
	\le \frac{\Cl{cte-286} r}{\abs{y}^{s + 1}}. 
	\]
	We thus have
	\begin{multline*}
	\fint_{B_{r}(0)} \biggl( \int_{\R^{d} \setminus B_{2r}(0)} \biggabs{\frac{z - y}{\abs{z - y}^{s + 1}} + 
	\frac{y}{\abs{y}^{s + 1}}} \dif m(y) \biggr) \dif z\\
	\le \Cr{cte-286} r \fint_{B_{r}(0)} \biggl( \int_{\R^{d} \setminus B_{2r}(0)} \frac{ \dif m(y)}{\abs{y}^{s + 1}} \biggr) \dif z = \Cr{cte-286} r \int_{\R^{d} \setminus B_{2r}(0)} \frac{ \dif m(y)}{\abs{y}^{s + 1}}.
	\end{multline*}
	Using Cavalieri's principle, see e.g.~Corollary~2.2.34 in \cite{Willem},  one gets
	\[{}
	\int_{\R^{d} \setminus B_{2r}(0)} \frac{\dif m(y)}{\abs{y}^{s + 1}}
	= (s + 1) \int_{2r}^{\infty} \frac{m(B_{t}(0))}{t^{s + 2}} \dif t
	\]
	and the lemma follows.	
\end{proof}

To handle the limit as \(r \to 0\) of the right-hand side in the estimate in Lemma~\ref{lemmaEstimateMeanValue},
we need an elementary fact from Real Analysis:

\begin{lemma}
	\label{lemmaLimit}
	Let \(h : (0, \infty){} \to \R\) be a bounded measurable function.
	If\/ \(\lim\limits_{t \to 0}{h(t)} = 0\), then, for every \(\beta > 0\),{}
	\[{}
	\lim_{r \to 0}{r^{\beta} \int_{r}^{\infty} \frac{h(t)}{t^{1 + \beta}} \dif t} 
	= 0.
	\]
\end{lemma}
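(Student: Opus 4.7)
The plan is to reduce this to a standard $\varepsilon$-$\delta$ splitting argument: near $t = 0$ the hypothesis $h(0)=0$ together with continuity makes $h(t)$ small, and on the complement the boundedness of $h$ combined with the fast decay of $t^{-1-\beta}$ takes over.

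More precisely, fix $\varepsilon > 0$. First I would invoke continuity at $0$ together with $h(0) = 0$ to obtain $\delta > 0$ such that $|h(t)| \le \varepsilon$ for every $0 \le t \le \delta$. Then for $0 < r < \delta$ I would split
\[
r^{\beta} \int_{r}^{\infty} \frac{h(t)}{t^{1 + \beta}} \dif t
= r^{\beta} \int_{r}^{\delta} \frac{h(t)}{t^{1 + \beta}} \dif t
+ r^{\beta} \int_{\delta}^{\infty} \frac{h(t)}{t^{1 + \beta}} \dif t.
\]

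For the first term, the bound $|h(t)| \le \varepsilon$ and an explicit computation of $\int_{r}^{\delta} t^{-1-\beta}\dif t = (r^{-\beta} - \delta^{-\beta})/\beta$ give an upper estimate of $\varepsilon/\beta$, which is independent of $r$. For the second term, using $M := \sup_{t \ge 0}|h(t)| < \infty$ and $\int_{\delta}^{\infty} t^{-1-\beta} \dif t = 1/(\beta \delta^{\beta})$, I get an upper estimate of $M r^{\beta}/(\beta \delta^{\beta})$, which tends to $0$ as $r \to 0$ since $\delta$ is fixed. Combining the two, $\limsup_{r \to 0} |r^{\beta}\int_{r}^{\infty} h(t)/t^{1+\beta}\dif t| \le \varepsilon/\beta$, and letting $\varepsilon \to 0$ yields the conclusion.

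There is no real obstacle here; the only subtlety is to remember that $\delta$ must be chosen \emph{before} letting $r \to 0$, so that the first estimate produces a constant multiple of $\varepsilon$ uniformly in $r$, while the second estimate benefits from the factor $r^{\beta}$ with $\delta$ frozen. No properties of $h$ beyond boundedness and continuity at the single point $t=0$ are used.
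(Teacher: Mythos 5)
Your argument is correct and follows the same route as the paper: choose $\delta$ from continuity at $0$, split the integral at $\delta$, bound the near part by $\varepsilon/\beta$ uniformly in $r$ and the far part by a quantity vanishing as $r\to 0$, then take $\limsup$ and let $\varepsilon\to 0$. (Your bound $M r^{\beta}/(\beta\delta^{\beta})$ on the tail is in fact slightly cleaner than the display in the paper, which appears to have factored out $\epsilon$ from both terms by a small slip.)
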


\resetconstant
\begin{proof}
	Given \(\epsilon > 0\), take \(\delta > 0\) such that
	\(\abs{h(t)} \le \epsilon\) for every \(0 < t \le \delta\).
	For \(0 < r < \delta\), we then have
	\[{}
	\biggl|\int_{r}^{\infty} \frac{h(t)}{t^{1 + \beta}} \dif t\biggr|
	\le \biggl|\int_{r}^{\delta}\biggr| + \biggl|\int_{\delta}^{\infty}\biggr|
	\le \frac{\epsilon}{\beta} \Bigl(\frac{1}{r^{\beta}} + \frac{\norm{h}_{L^{\infty}}}{\delta^{\beta}} \Bigr).
	\]
	Therefore,
	\[{}
	\limsup_{r \to 0}{r^{\beta} \biggl|\int_{r}^{\infty} \frac{h(t)}{t^{1 + \beta}} \dif t \biggr| }
	\le \frac{\epsilon}{\beta}.
	\]
	Since this inequality holds for every \(\epsilon > 0\), the limit equals zero.
\end{proof}

\resetconstant
\begin{proof}[Proof of Theorem~\ref{theoremRieszLebesgue}] ``\(\Longleftarrow\)''.
	We assume that the principal value \(\mathrm{p.v.}\, V_{s}(a)\) exists and the limit \eqref{eq-328} holds.
	For almost every \(z \in \R^{d}\),{}
	\begin{multline*}
	\biggabs{V_{s}(z) - \int_{\R^{d} \setminus B_{2r}(a)} \frac{a - y}{\abs{a - y}^{s + 1}} \dif m(y)}\\
	\le \biggabs{\int_{\R^{d} \setminus B_{2r}(a)} \frac{z - y}{\abs{z - y}^{s + 1}} \dif m(y) - \int_{\R^{d} \setminus B_{2r}(a)} \frac{a - y}{\abs{a - y}^{s + 1}} \dif m(y) }\\
	+ \biggabs{\int_{B_{2r}(a)} \frac{z - y}{\abs{z - y}^{s + 1}} \dif m(y)}.
	\end{multline*}
	Hence,
	\begin{multline*}
	\fint_{B_{r}(a)}\biggabs{V_{s}(z) - \int_{\R^{d} \setminus B_{2r}(a)} \frac{a - y}{\abs{a - y}^{s + 1}} \dif m(y)} \dif z\\
	\le \fint_{B_{r}(a)} \biggl(\int_{\R^{d} \setminus B_{2r}(a)} \biggabs{\frac{z - y}{\abs{z - y}^{s + 1}} \dif m(y) - \frac{a - y}{\abs{a - y}^{s + 1}}} \dif m(y)  \biggr)\dif z\\
	+ \fint_{B_{r}(a)} \biggl( \int_{B_{2r}(a)} \frac{ \dif m(y)}{\abs{z - y}^{s}} \biggr) \dif z.
	\end{multline*}
	We then have by Lemmas~\ref{lemmaEstimateAverage} and \ref{lemmaEstimateMeanValue},
	\begin{multline}
	\label{eq-334}
	\fint_{B_{r}(a)}\biggabs{V_{s}(z) - \int_{\R^{d} \setminus B_{2r}(a)} \frac{a - y}{\abs{a - y}^{s + 1}} \dif m(y)} \dif z
	\le \Cl{cte-335} \biggl( r \int_{2r}^{\infty} \frac{m(B_{t}(a))}{t^{s + 2}} \dif t + \frac{m(B_{2r}(a))}{r^{s}} \biggr).
	\end{multline}
	Denoting by \(\alpha\) the principal value of \(V_{s}\) at \(a\), we get
	\begin{multline*}
	\fint_{B_{r}(a)}\abs{V_{s}(z) - \alpha} \dif z\\
	\le \Cr{cte-335} \biggl( r \int_{2r}^{\infty} \frac{m(B_{t}(a))}{t^{s + 2}} \dif t + \frac{m(B_{2r}(a))}{r^{s}} \biggr)
	 + \biggabs{\int_{\R^{d} \setminus B_{2r}(a)} \frac{a - y}{\abs{a - y}^{s + 1}} \dif m(y) - \alpha}.
	\end{multline*}
	As \(r \to 0\), the last term converges to zero by definition of principal value.
	The quantity in parentheses also converges to zero from the assumption on the measure \(m\), as we can apply Lemma~\ref{lemmaLimit} with \(\beta = 1\) and \(h(t) = m(B_{t}(a))/t^{s}\) for \(t > 0\).
\end{proof}

\section{Proof of the direct implication of Theorem~\ref{theoremRieszLebesgue}}
\label{sectionDirect}

We begin with the following estimate: 

\begin{proposition}
	\label{lemmaEstimateDensity}
	Let \(0 < s \le d - 1\).{}
	For every \(\alpha \in \R^{d}\) and every \(r > 0\),
	\[{}
	\frac{m(B_{r}(a))}{r^{s}}
	\le C \fint_{B_{2r}(a)}{\abs{V_{s} - \alpha}},
	\]
	for some constant \(C > 0\) depending on \(d\).
\end{proposition}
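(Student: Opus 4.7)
My plan is to establish Proposition~\ref{lemmaEstimateDensity} via a duality argument against a compactly supported cut-off. Pick a scalar \(\psi \in C_c^\infty(B_{2r}(a))\) with \(0 \le \psi \le 1\), \(\psi \equiv 1\) on \(B_r(a)\), and \(\norm{\nabla \psi}_{L^\infty} \le C/r\). Since \(\int_{\R^d} \nabla \psi \dif x = 0\), one has
\[
\int_{\R^{d}} V_{s} \cdot \nabla \psi \dif x = \int_{B_{2r}(a)} (V_{s} - \alpha) \cdot \nabla \psi \dif x,
\]
so the absolute value of the left-hand side is at most \((C/r) \int_{B_{2r}(a)} \abs{V_{s} - \alpha} \dif x\), i.e.\ a constant times \(r^{d-1} \fint_{B_{2r}(a)} \abs{V_{s} - \alpha}\). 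This provides the upper bound that must be matched.

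The heart of the argument is a lower bound of order \(r^{d-s-1} m(B_{r}(a))\) for the same integral. I would apply Fubini --- legitimate because \(\psi\) is bounded with compact support and \(\abs{x-y}^{-s}\) is locally integrable in \(x\) for each \(y\) as long as \(s < d\) --- to write
\[
\int V_{s} \cdot \nabla \psi \dif x = \int_{\R^{d}} \Phi_{y} \dif m(y), \qquad \Phi_{y} \vcentcolon= \int \frac{x-y}{\abs{x-y}^{s+1}} \cdot \nabla \psi(x) \dif x,
\]
and evaluate \(\Phi_{y}\) by integrating by parts in \(x\). The distributional divergence of the vector field \(x \mapsto (x-y)/\abs{x-y}^{s+1}\) equals the locally integrable function \((d-s-1)/\abs{x-y}^{s+1}\) when \(0 < s < d-1\), while at the threshold \(s = d-1\) the pointwise divergence off \(y\) vanishes and the mass collapses to \(\omega_{d-1}\delta_{y}\). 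Either way \(\Phi_{y}\) has a clean closed form:
\[
\Phi_{y} = -(d-s-1)\int \frac{\psi(x)}{\abs{x-y}^{s+1}} \dif x \quad (s < d-1), \qquad \Phi_{y} = -\omega_{d-1}\,\psi(y) \quad (s = d-1),
\]
and in particular \(\Phi_{y} \le 0\) for every \(y \in \R^{d}\).

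Non-positivity of \(\Phi_{y}\) allows me to discard the contribution from \(y \notin B_{r}(a)\); restricting to \(y \in B_{r}(a)\) only makes \(-\int V_{s} \cdot \nabla \psi\) smaller. For such \(y\) the trivial bound \(\abs{x-y} \le 2r\) on \(B_{r}(a)\) yields \(-\Phi_{y} \ge c(d-s-1)\, r^{d-s-1}\) in the first regime, while \(-\Phi_{y} = \omega_{d-1}\) directly in the second; both have the shape \(c\, r^{d-s-1}\). Integrating against \(\dif m\) over \(B_{r}(a)\) produces
\[
\biggabs{\int V_{s} \cdot \nabla \psi \dif x} \ge c'\, r^{d-s-1}\, m(B_{r}(a)),
\]
which combined with the upper bound gives \(m(B_{r}(a)) \le C\, r^{s} \fint_{B_{2r}(a)} \abs{V_{s} - \alpha}\), as desired. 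The only delicate step I expect is the integration by parts at the endpoint \(s = d-1\), where \((x-y)/\abs{x-y}^{d}\) has no classically integrable pointwise divergence and one must interpret the boundary contribution as a Dirac mass via the divergence theorem on \(\{\abs{x-y} > \varepsilon\}\) followed by \(\varepsilon \to 0\); away from this threshold the computation is entirely routine.
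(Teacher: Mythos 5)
Your proof is correct and follows essentially the same strategy as the paper: pair \(V_s\) against a zero-average vector field, apply Fubini, and exploit the sign of \(\div\bigl((x-y)/\abs{x-y}^{s+1}\bigr) = (d-s-1)/\abs{x-y}^{s+1}\) (a Dirac mass at \(y\) when \(s=d-1\)) to produce the lower bound of order \(r^{d-s-1}\, m(B_r(a))\). The only difference is the choice of test field --- \(\nabla\psi\) for a smooth cutoff here, versus \((x-a)/\abs{x-a}\) restricted to \(B_{2r}(a)\) in the paper, which is handled there via polar coordinates and the divergence theorem on shells --- but the underlying computation is the same.
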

\begin{proof}
	We may assume that \(a = 0\). 
	We take the inner product of \(V_{s}(x)\) with \(x/\abs{x}\) and integrate over the ball \(B_{2r}(0)\) for any \(r > 0\).{}
	By integrability of the function \((x, y) \mapsto {1}/{\abs{x - y}^{s}}\) with respect to the product measure \(\cH^{d} \otimes m\) in \(B_{2r}(0) \times \R^{d}\) we can interchange the order of integration:	
	\begin{equation}
		\label{eq-487}
	\int_{B_{2r}(0)}{V_{s}(x) \cdot \frac{x}{\abs{x}} \dif x}
	= \int_{\R^{d}} \biggl( \int_{B_{2r}(0)} \frac{x - y}{\abs{x - y}^{s + 1}} \cdot \frac{x}{\abs{x}} \dif x \biggr) \dif m(y). 
	\end{equation}
	By the integration formula in polar coordinates, 
	\begin{equation}
		\label{eq-494}
	\int_{B_{2r}(0)} \frac{x - y}{\abs{x - y}^{s + 1}} \cdot \frac{x}{\abs{x}} \dif x
	= \int_{0}^{2r}\biggl( \int_{\partial B_{\rho}(0)} \frac{x - y}{\abs{x - y}^{s + 1}} \cdot \frac{x}{\abs{x}} \dif\sigma(x) \biggr) \dif \rho,
	\end{equation}
	where \(\sigma\) denotes the surface measure on the sphere \(\partial B_{\rho}(0)\).{}

	We claim that
	\begin{equation}
		\label{eq-588}
	\int_{\partial B_{\rho}(0)} \frac{x - y}{\abs{x - y}^{s + 1}} \cdot \frac{x}{\abs{x}} \dif\sigma(x){}
	\ge \epsilon \, \rho^{d - s - 1} \chi_{B_{\rho}(0)}(y),
	\end{equation}
	for some \(\epsilon > 0\) independent of \(\rho\) and \(y\).{}
	To this end, note that
	\[{}
	\frac{x - y}{\abs{x - y}^{s + 1}} = \nabla g(x)
	\quad \text{where \(g(x) \vcentcolon= - \frac{1}{(s - 1)} \frac{1}{\abs{x - y}^{s - 1}}\)}.
	\]
	When \(s < d - 1\), it follows from the Divergence Theorem in \(B_{\rho}(0)\) that
	\[{}
	\int_{\partial B_{\rho}(0)} \frac{x - y}{\abs{x - y}^{s + 1}} \cdot \frac{x}{\abs{x}} \dif\sigma(x){}
	= \int_{B_{\rho}(0)} \Delta g
	= (d - s - 1) \int_{B_{\rho}(0)} \frac{\dif x}{\abs{x - y}^{s + 1}},
	\]
	which implies \eqref{eq-588}.
	When \(s = d - 1\), the function \(g\) is harmonic in \(\R^{d} \setminus \{y\}\).{}
	An application of the Divergence Theorem on \(B_{\rho}(0) \setminus B_{\eta}(y)\) with \(\eta \to 0\) yields
	\[{}
	\int_{\partial B_{\rho}(0)} \frac{x - y}{\abs{x - y}^{s + 1}} \cdot \frac{x}{\abs{x}} \dif\sigma(x){}
	= 
	\begin{cases}
		\cH^{d - 1}(\partial B_{1}(0)) & \text{if \(y \in {B_{\rho}(0)}\),}\\
		0 & \text{if \(y \not\in \overline{B_{\rho}(0)}\),}
	\end{cases} 
	\]
	which also gives \eqref{eq-588} and completes the proof of the claim.
	
	Combining \eqref{eq-494} and \eqref{eq-588}, we get
	\[{}
	\int_{B_{2r}(0)} \frac{x - y}{\abs{x - y}^{s + 1}} \cdot \frac{x}{\abs{x}} \dif x
	\ge c \, r^{d - s}\chi_{B_{r}(0)}(y),
	\]
	which by \eqref{eq-487} then gives
	\[{}
	\int_{B_{2r}(0)}{V_{s}(x) \cdot \frac{x}{\abs{x}} \dif x}
	\ge c \, r^{d - s} m(B_{r}(0)).
	\]
	Since \(x/\abs{x}\) has zero average, we can subtract any constant vector \(\alpha\) in the integrand in the left-hand side without changing the value of the integral.
	The conclusion readily follows since \(x/\abs{x}\) is a unit vector.
\end{proof}

	When \(s > d - 1\), one has
	\[{}
	\int_{\partial B_{\rho}(0)} \frac{x - y}{\abs{x - y}^{s + 1}} \cdot \frac{x}{\abs{x}} \dif\sigma(x){}
	= 
	\begin{cases}
	\displaystyle (s + 1 - d) \int_{\R^{d} \setminus B_{\rho}(0)} \frac{\dif x}{\abs{x - y}^{s + 1}}	 & \text{if \(y \not\in \overline{B_{\rho}(0)}\),}\\
	\displaystyle (d - s - 1) \int_{B_{\rho}(0)} \frac{\dif x}{\abs{x - y}^{s + 1}}	 & \text{if \(y \in {B_{\rho}(0)}\).}
	\end{cases} 
	\]
	In particular, these values have opposite signs, in contrast with the previous proof.
	We prove in that case the following counterpart of Proposition~\ref{lemmaEstimateDensity} that is enough for our purposes:

\begin{proposition}
	\label{lemmaEstimateDensityNew}
	Let \(d - 1 < s < d\).{}
	For every \(\alpha \in \R^{d}\) and every \(r > 0\),
	\[{}
	\frac{m(B_{r}(a))}{r^{s}}
	\le C' \biggl( \fint_{B_{r}(a)}{\abs{V_{s} - \alpha}}
	+ r^{d - s} \int_{\R^{d} \setminus B_{r}(a)}{ \frac{\abs{V_{s}(x) - \alpha}}{\abs{x - a}^{2d - s}} \dif x } \biggr),
	\]
	for some constant \(C' > 0\) depending on \(s\) and \(d\).
\end{proposition}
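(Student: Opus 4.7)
The plan is to follow the strategy of Proposition~\ref{lemmaEstimateDensity}, but to replace the compactly supported test field \(\chi_{B_{2r}(a)} \cdot (x-a)/|x-a|\) by a radial one with decay at infinity, so as to match the new outer integral on the right-hand side. After translating so that \(a = 0\), I would introduce
\[{}
\phi(x) \vcentcolon= \Psi'(|x|) \frac{x}{|x|}, \qquad \Psi'(\rho) \vcentcolon= \begin{cases} 1/r^{d} & \text{if } 0 \le \rho \le r,\\ r^{d-s}/\rho^{2d-s} & \text{if } \rho \ge r. \end{cases}
\]
This \(\phi\) is continuous, satisfies \(\int_{\R^{d}} \phi \dif x = 0\) because it is a radial vector field and \(\int_{S^{d-1}} \omega \dif \sigma = 0\), and one has
\[{}
|\phi(x)| \le C/r^{d} \text{ on } B_{r}(0), \qquad |\phi(x)| \le C r^{d-s}/|x|^{2d-s} \text{ on } \R^{d} \setminus B_{r}(0).
\]

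The next step is to evaluate \(\int (V_{s} - \alpha) \cdot \phi \dif x\) in two complementary ways. For the upper bound, since \(\int \phi \dif x = 0\) one may subtract \(\alpha\) freely, and the pointwise bound on \(|\phi|\) yields
\[{}
\biggabs{\int (V_{s} - \alpha) \cdot \phi \dif x} \le C \biggl[ \fint_{B_{r}(0)} |V_{s} - \alpha| + r^{d-s} \int_{\R^{d} \setminus B_{r}(0)} \frac{|V_{s} - \alpha|}{|x|^{2d-s}} \dif x \biggr],
\]
which matches the right-hand side of the proposition up to a constant. For the lower bound, Fubini gives
\[{}
\int V_{s} \cdot \phi \dif x = \int F(y) \dif m(y), \qquad F(y) \vcentcolon= \int \frac{x-y}{|x-y|^{s+1}} \cdot \phi(x) \dif x,
\]
with \(F\) radial in \(y\). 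Writing \(\phi = \nabla \psi\) for \(\psi(x) \vcentcolon= \Psi(|x|)\) and integrating by parts (the boundary term at infinity vanishing since \(\Psi'(\rho) \to 0\)), one obtains
\[{}
F(y) = \frac{1}{s-1} \int \frac{\Delta \psi(x)}{|x-y|^{s-1}} \dif x.
\]
A direct calculation shows that \(\Delta \psi\) equals \((d-1)/(|x|\, r^{d})\) on \(B_{r}(0)\) and \(-(d-s+1) r^{d-s}/|x|^{2d-s+1}\) on \(\R^{d} \setminus B_{r}(0)\); hence \(\Delta \psi\) is positive inside \(B_{r}(0)\), negative outside, and integrates to zero by the divergence theorem applied to \(\nabla \psi\). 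At \(y = 0\) one computes
\[{}
F(0) = \frac{|S^{d-1}|(2d-s)}{d(d-s) r^{s}}.
\]
The aim is to upgrade this to a uniform lower bound \(F(y) \ge c/r^{s}\) on all of \(B_{r}(0)\); combined with a suitable control on the outside contribution (using the vanishing mean of \(\Delta \psi\) to extract cancellation), this gives \(\int V_{s} \cdot \phi \dif x \ge c\, m(B_{r}(0))/r^{s}\), and the proposition follows from the upper bound.

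The main obstacle will be to establish the uniform lower bound \(F(y) \ge c/r^{s}\) on \(B_{r}(0)\) and to absorb the contribution of \(y\) outside \(B_{r}(0)\) into the right-hand side. The sign flip of \(h(\rho, y) \vcentcolon= \int_{\partial B_{\rho}(0)} \frac{x-y}{|x-y|^{s+1}} \cdot \frac{x}{|x|} \dif \sigma(x)\) between \(\rho < |y|\) and \(\rho > |y|\) in the regime \(s > d - 1\), noted in the remark preceding the proposition, rules out the direct sign argument from Proposition~\ref{lemmaEstimateDensity}. One must instead rely on the explicit cancellation between the positive part of \(\Delta \psi\) on \(B_{r}(0)\) and its negative part outside, together with the radial monotonicity of the kernel \(|x-y|^{1-s}\), to propagate the positivity at the center to the entire ball.
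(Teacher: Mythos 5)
Your strategy inverts the logic of the paper's argument, and the inversion is where the gap lies. The paper (Lemma~\ref{lemmaInversion}) fixes a nonnegative bump \(\varphi\) with \(\varphi \ge 1\) on \(B_1(0)\) and \emph{solves} for a vector field \(\Psi\) with \(\int_{\R^d}\frac{x-y}{|x-y|^{s+1}}\cdot\Psi(y)\,\dif y = \varphi(x)\); the nonnegativity of the resulting kernel acting on \(m\) is then automatic, and the work is to show the decay \(|\Psi(x)| \lesssim (1+|x|)^{-(2d-s)}\), which follows easily from \(\int \nabla\varphi = 0\). You instead guess the test field \(\phi\) directly (matching the desired decay by fiat) and are left to prove that \(F(y) = \int\frac{x-y}{|x-y|^{s+1}}\cdot\phi(x)\,\dif x\) is bounded below by \(c/r^s\) on \(B_r(0)\) \emph{and} is nonnegative on all of \(\R^d\setminus B_r(0)\). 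You state only the value \(F(0)\), assert that you would ``upgrade'' this to a uniform lower bound, and say you would ``absorb the contribution of \(y\) outside \(B_r(0)\) into the right-hand side.'' None of this is carried out, and the last point cannot work as stated: the right-hand side of the Proposition is a Lebesgue integral of \(|V_s-\alpha|\), not an \(m\)-integral, so there is no mechanism to absorb a negative \(\int_{\R^d\setminus B_r} F\,\dif m\). Since \(m\) is arbitrary, if \(F(y_0) < 0\) at some \(y_0\) outside \(B_r\), a measure concentrated at \(y_0\) plus a small mass at the origin violates the lower bound \(\int F\,\dif m \ge c\, m(B_r)/r^s\) entirely. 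So you genuinely need \(F\ge 0\) on \(\R^d\).

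That nonnegativity is not evident from your construction. Writing \(\psi(x)=\Psi(|x|)\) and letting \(L=\lim_{\rho\to\infty}\Psi(\rho)\), one finds via the Fourier transform that \(F\) is a positive multiple of \((-\Delta)^{\gamma}(L-\psi)\) with \(\gamma=(s+1-d)/2\in(0,1/2)\). Since \(L-\psi\) decays exactly like \(|x|^{-(d-2\gamma)}\), which is the borderline exponent for which \((-\Delta)^\gamma\) sends a power to a Dirac, the sign of \(F\) at large \(|x|\) is governed by lower-order corrections and is far from obvious; in particular your appeal to ``radial monotonicity of the kernel'' and the Newton-type cancellation you invoke does not apply at orders \(\alpha = d-s+1 \ne 2\) as it does for the Newtonian case \(s=d-1\). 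In short, the key positivity claim is the entire content of the lemma you would need, and it is neither proved nor clearly true for your explicit \(\phi\). The paper's inversion strategy (choose \(\varphi\ge0\), then derive \(\Psi\)) is precisely designed to avoid having to prove such a statement.
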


As the vector-field \((x - a)/\abs{x - a}\) does not seem to be a convenient choice to test \(V_{s}\) in this range of \(s\), we rely on a different one that is provided by 

\begin{lemma}
	\label{lemmaInversion}
	Let \(d - 1 < s < d\).{}
	For every \(\varphi \in C_{c}^{\infty}(\R^{d})\), there exists a summable smooth vector-field \(\Psi : \R^{d} \to \R^{d}\) such that \(\int_{\R^{d}}{\Psi} = 0\),
	\[{}
	\varphi(x)
	= \int_{\R^{d}}{\frac{x - y}{\abs{x - y}^{s + 1}} \cdot \Psi(y) \dif y}
	\quad \text{for every \(x \in \R^{d}\),}
	\]
	and
	\[{}
	\abs{\Psi(x)} 
	\le \frac{C''}{(1 + \abs{x})^{2d - s}}
	\quad \text{for every \(x \in \R^{d}\),}
	\]
	for some constant \(C'' > 0\) depending on \(s\), \(d\) and \(\varphi\).
\end{lemma}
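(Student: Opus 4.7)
The kernel $K(z) = z/\abs{z}^{s+1}$ factors as $K(z) = -(s-1)^{-1}\nabla\abs{z}^{-(s-1)}$, namely, up to a dimensional constant, the gradient of the Riesz kernel of order $d-s+1$, whose Fourier symbol is proportional to $i\xi\abs{\xi}^{s-1-d}$. To invert the convolution equation $K * \Psi = \varphi$, I am therefore led to take $\Psi$ whose Fourier symbol is proportional to $i\xi\abs{\xi}^{d-s-1}\hat{\varphi}(\xi)$, which in physical space corresponds to the gradient of the Riesz potential of $\varphi$ of order $s+1-d\in(0,1)$. Concretely, I would define
\[{}
u(x) := \int_{\R^{d}}\frac{\varphi(y)}{\abs{x-y}^{2d-s-1}}\dif y
\qquad \text{and} \qquad \Psi(x) := c\,\nabla u(x),
\]
with an explicit constant $c$ depending on $s$ and $d$ chosen so that $K * \Psi = \varphi$. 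Since $2d-s-1 < d$ the kernel is locally integrable, and because $\varphi \in C_{c}^{\infty}(\R^{d})$ any number of derivatives of $u$ can be transferred to $\varphi$ by integration by parts, so $u$, and hence $\Psi$, is smooth.

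The delicate step --- and what I expect to be the main technical obstacle --- is establishing the pointwise decay $\abs{\Psi(x)} \le C''/(1+\abs{x})^{2d-s}$, one power sharper than a naive differentiation under the integral sign would yield. To see it, I would first transfer the derivative onto $\varphi$ via integration by parts,
\[{}
\Psi(x) = c\int_{\R^{d}}\frac{\nabla\varphi(y)}{\abs{x-y}^{2d-s-1}}\dif y,
\]
and then exploit the cancellation $\int_{\R^{d}}\nabla\varphi = 0$. For $\abs{x}$ large compared with $\diam\supp\varphi$, subtracting the constant profile $\abs{x}^{-(2d-s-1)}$ inside the integral and Taylor-expanding in $y$ gives
\[{}
\biggabs{\frac{1}{\abs{x-y}^{2d-s-1}} - \frac{1}{\abs{x}^{2d-s-1}}} \le \frac{C\abs{y}}{\abs{x}^{2d-s}}
\quad \text{for \(y \in \supp\varphi\),}
\]
from which the required pointwise bound follows, the bounded-$\abs{x}$ regime being immediate from smoothness of $\Psi$. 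Summability of $\Psi$ follows since $2d-s > d$, and $\int_{\R^{d}}\Psi = c\int_{\R^{d}}\nabla u = 0$ holds by the divergence theorem on expanding balls, since $u(x) = O(\abs{x}^{-(2d-s-1)})$ at infinity with $2d-s-1 > d-1$.

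It remains to verify the integral representation $\varphi(x) = \int_{\R^{d}} K(x-y)\cdot\Psi(y)\dif y$. I would do this by Fourier analysis: using the classical identity $\widehat{\abs{z}^{-\beta}}(\xi) = \gamma_{\beta,d}\abs{\xi}^{\beta-d}$ --- applied with $\beta = s-1$ to compute $\widehat{K}(\xi)$ and with $\beta = 2d-s-1$ to compute $\widehat{u}(\xi)$ --- a direct symbol calculation gives $\widehat{K}(\xi)\cdot\widehat{\Psi}(\xi) = \hat\varphi(\xi)$ provided $c$ is chosen to absorb the product of the two normalization constants; inverting the Fourier transform then delivers the desired pointwise identity since $\hat{\varphi} \in \mathcal{S}(\R^{d})$. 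Alternatively, one can verify the identity without Fourier analysis by integration by parts together with the semigroup relation $I_{\alpha}\circ I_{\beta} = I_{\alpha+\beta}$ for Riesz potentials of positive order.
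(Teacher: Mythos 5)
Your argument is correct and follows essentially the same path as the paper: define $\Psi$ as a constant multiple of $\nabla\varphi * \abs{z}^{-(2d-s-1)}$, verify the convolution identity on the Fourier side using $\widehat{\abs{z}^{-\beta}} = c\abs{\xi}^{\beta-d}$, and deduce the decay $\abs{\Psi(x)} \le C\abs{x}^{-(2d-s)}$ from the cancellation $\int_{\R^d}\nabla\varphi = 0$ together with a first-order (Mean Value / Taylor) estimate of $\abs{x-y}^{-(2d-s-1)} - \abs{x}^{-(2d-s-1)}$. The one small deviation is your use of the divergence theorem on expanding balls, rather than Fubini, to obtain $\int_{\R^d}\Psi = 0$; that is actually the cleaner route, since $\abs{z}^{-(2d-s-1)}$ is not integrable at infinity and so Tonelli does not apply directly to justify interchanging the integrals.
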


The existence of \(\Psi\) is well-known and can be obtained as a simple exercise using the Fourier transform and the identity
\[{}
\widehat{\varphi}
= \widehat{\frac{z}{\abs{z}^{s + 1}}} \cdot \widehat{\Psi}
= c \,  \frac{\xi}{\abs{\xi}^{d - s + 1}}  \cdot \widehat{\Psi}
\]
for a constant \(c \in \mathbb{C} \setminus \{0\}\).{}
For the convenience of the reader, we explain below that one can take \(\Psi : \R^{d} \to \R^{d}\) directly given by
\begin{equation}
	\label{eq-715}
	\Psi(x)
	= c' \int_{\R^{d}}{\frac{\nabla \varphi(y)}{\abs{x - y}^{2d - s - 1}} \dif y},
\end{equation}
	with \(c'\in \R \setminus \{0\}\).{}
	 Observe that the integral above is well-defined for \(s > d - 1\).
	
	\resetconstant
	\begin{proof}[Proof of Lemma~\ref{lemmaInversion}]
	That \(\Psi\) in \eqref{eq-715} satisfies the desired integral relation with \(\varphi\) follows from an explicit computation of the convolution between the homogeneous functions \(z/\abs{z}^{\ell}\) and \(1/\abs{z}^{j}\)\,; see Lemma~15.10 and Exercise~15.4 in \cite{Ponce:2016} for dimensions \(d \ge 2\) and \(d = 1\), respectively. 
	We also note that \(\Psi\) is bounded and smooth.
	Using that \(\int_{\R^{d}}{\nabla \varphi} = 0\), one writes
	\[{}
	\Psi(x)
	= c' \int_{\R^{d}}{\nabla \varphi(y) \biggl(\frac{1}{\abs{x - y}^{2d - s - 1}} - \frac{1}{\abs{x}^{2d - s - 1}} \biggr) \dif y}.
	\]
	Take \(R > 0\) such that \(\supp{\varphi} \subset B_{R}(0)\).
	For \(x \in \R^{d} \setminus B_{2R}(0)\), an application of the Mean Value Theorem gives 
	\[{}
	\abs{\Psi(x)}
	\le \int_{B_{R}(0)}{\abs{\nabla \varphi(y)} \biggabs{\frac{1}{\abs{x - y}^{2d - s - 1}} - \frac{1}{\abs{x}^{2d - s - 1}}} \dif y}
	\le \biggl( \int_{\R^{d}}{\abs{\nabla \varphi}} \biggr) \frac{\C}{\abs{x}^{2d - s}}.
	\]
	The estimate of \(\abs{\Psi(x)}\) in \(\R^{d}\) then follows from the boundedness of \(\Psi\).{}
	In particular, \(\Psi\) is summable for \(s < d\) and one then verifies that
	\(
	\int_{\R^{d}}{\Psi} = 0.
	\)
\end{proof}

\resetconstant
\begin{proof}[Proof of Proposition~\ref{lemmaEstimateDensityNew}]
	We may assume that \(a = 0\). 
	Fix a nonnegative function \(\varphi \in C_{c}^{\infty}(\R^{d})\) such that \(\varphi \ge 1\) in \(B_{1}(0)\) and let \(\Psi : \R^{d} \to \R^{d}\) be the vector-field given by Lemma~\ref{lemmaInversion}.
	For \(r > 0\),  one has by scaling that
	\[{}
	\varphi\Bigl(\frac{x}{r}\Bigr)
	= \int_{\R^{d}}{\frac{x - y}{\abs{x - y}^{s + 1}} \cdot \Psi_{r}(y) \dif y}
	\quad \text{for every \(x \in \R^{d}\),}
	\]
	where \(\Psi_{r}(x) \vcentcolon= \frac{1}{r^{d - s}} \Psi(\frac{x}{r})\).{}
		Since \(\varphi \ge 1\) on \(B_{1}(0)\) and \(m\) is a nonnegative measure,{}
	we have
	\[{}
	m(B_{r}(0))
	\le \int_{\R^{d}}{\varphi\Bigl(\frac{x}{r}\Bigr) \dif m(x)}
	= \int_{\R^{d}}{\biggl( \int_{\R^{d}}{\frac{x - y}{\abs{x - y}^{s + 1}} \cdot \Psi_{r}(y) \dif y} \biggr) \dif m(x)}.
	\]
	Then, by Fubini's theorem,
	\[{}
	m(B_{r}(0))
	\le - \int_{\R^{d}} V_{s} \cdot \Psi_{r} \,.
	\]
	Since \(\int_{\R^{d}}{\Psi_{r}} = 0\), for every \(\alpha \in \R^{d}\) we get
	\begin{equation}
		\label{eq-697}
	m(B_{r}(0))
	\le - \int_{\R^{d}} (V_{s} - \alpha) \cdot \Psi_{r} 
	\le \int_{\R^{d}} \abs{V_{s} - \alpha} \abs{\Psi_{r}}.
	\end{equation}
	The pointwise estimate satisfied by \(\Psi\) gives
	\begin{equation}
		\label{eq-708}
	\abs{\Psi_{r}(x)} = \frac{1}{r^{d - s}} \Bigabs{\Psi\Bigl(\frac{x}{r}\Bigr)} 
	\le \C \Bigl( \frac{1}{r^{d - s}} \chi_{B_{r}(0)}(x) + \frac{r^{d}}{\abs{x}^{2d - s}} \chi_{\R^{d} \setminus B_{r}(0)}(x) \Bigr).
	\end{equation}
	The conclusion follows by inserting \eqref{eq-708} in \eqref{eq-697}.
\end{proof}

To handle the additional term that appears in Proposition~\ref{lemmaEstimateDensityNew}, 
compared to Proposition~\ref{lemmaEstimateDensity}, we need the following

\begin{lemma}
	\label{lemmaComputation}
	If \(f \in L^{1}(\R^{d})\) is such that
	\[{}
	\lim_{r \to 0}{\fint_{B_{r}(a)}{f}}
	= 0,
	\]
	then, for every \(\beta > 0\),{}
	\[{}
	\lim_{r \to 0}{r^{\beta} \int_{\R^{d} \setminus B_{r}(a)} \frac{f(x)}{\abs{x - a}^{d + \beta}} \dif x}
	= 0.
	\]
\end{lemma}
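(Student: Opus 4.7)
The plan is to assume $a = 0$ for convenience, introduce the cumulative integral $F(t) \vcentcolon= \int_{B_{t}(0)} f$, and reduce the statement to a one-variable limit amenable to Lemma~\ref{lemmaLimit}. Note that continuity of $F$ on $[0,\infty)$ follows from $f \in L^{1}(\R^{d})$ and dominated convergence (spheres have zero Lebesgue measure), and the hypothesis on averages is equivalent to $\lim_{t \to 0^{+}}{F(t)/t^{d}} = 0$ up to the constant $|B_{1}(0)|$.

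The key trick is the Fubini identity
\[
\frac{1}{\abs{x}^{d + \beta}}
= (d + \beta) \int_{\abs{x}}^{\infty}{\frac{\dif t}{t^{d + \beta + 1}}},
\]
which, since $f \in L^{1}(\R^{d})$ makes the double integral absolutely convergent on $\{\abs{x} > r\}$, yields
\[
\int_{\R^{d} \setminus B_{r}(0)}{\frac{f(x)}{\abs{x}^{d + \beta}} \dif x}
= (d + \beta) \int_{r}^{\infty}{\frac{F(t) - F(r)}{t^{d + \beta + 1}} \dif t}.
\]
Multiplying by $r^{\beta}$ and using $\int_{r}^{\infty}{t^{-d-\beta-1}\dif t} = \frac{1}{(d + \beta) r^{d + \beta}}$ splits the expression into
\[
r^{\beta} \int_{\R^{d} \setminus B_{r}(0)}{\frac{f(x)}{\abs{x}^{d + \beta}} \dif x}
= (d + \beta) \, r^{\beta}\int_{r}^{\infty}{\frac{F(t)}{t^{d + \beta + 1}} \dif t} - \frac{F(r)}{r^{d}}.
\]

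The second term tends to zero directly by hypothesis. For the first term, I would set $h(t) \vcentcolon= F(t)/t^{d}$ for $t > 0$ and $h(0) \vcentcolon= 0$; then $h$ is continuous on $[0, \infty)$, bounded on $(0, \infty)$ (because $\abs{h(t)} \le \norm{f}_{L^{1}}/t^{d}$ for large $t$ while $h(t) \to 0$ as $t \to 0^{+}$), and the integral becomes exactly $r^{\beta} \int_{r}^{\infty}{h(t)/t^{\beta + 1} \dif t}$, which tends to zero by Lemma~\ref{lemmaLimit}.

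The only mildly delicate step is the Fubini interchange with a signed $f$, but applying it first to $\abs{f}$ confirms the double integral is absolutely convergent thanks to the restriction $\abs{x} > r > 0$ (making the weight $1/\abs{x}^{d+\beta}$ bounded); verifying the continuity and boundedness of $h$ on $[0, \infty)$ is routine, so no substantial obstacle is expected.
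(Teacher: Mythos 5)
Your proof is correct and takes essentially the same route as the paper: both reduce the claim to the identity
\[
r^{\beta}\int_{\R^{d}\setminus B_{r}(a)}\frac{f(x)}{\abs{x-a}^{d+\beta}}\dif x
= C_{1}\,r^{\beta}\int_{r}^{\infty}\frac{1}{t^{1+\beta}}\biggl(\fint_{B_{t}(a)}f\biggr)\dif t - C_{2}\fint_{B_{r}(a)}f
\]
and then apply Lemma~\ref{lemmaLimit} with \(h(t)=\fint_{B_{t}(a)}f\). The only difference is how this identity is obtained: you use the layer-cake/Fubini representation \(\abs{x}^{-(d+\beta)}=(d+\beta)\int_{\abs{x}}^{\infty}t^{-(d+\beta+1)}\dif t\), whereas the paper writes the left-hand side in polar coordinates and then integrates by parts in the radial variable. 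These are interchangeable derivations; neither buys anything substantive over the other. One small remark: your statement ``\(\abs{h(t)}\le\norm{f}_{L^{1}}/t^{d}\) for large \(t\)'' by itself only gives decay at infinity; to conclude boundedness of \(h\) on \([0,\infty)\) you should also invoke continuity on compact subintervals together with the limit \(h(t)\to 0\) as \(t\to 0^{+}\), which you do implicitly by saying \(h\) extends continuously to \(0\). No gap.
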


\resetconstant
\begin{proof}
	We first prove that, for every \(r > 0\),{}
	\begin{equation}
	\label{eq-738}
	\int_{\R^{d} \setminus B_{r}(a)}{\frac{f(x)}{\abs{x - a}^{d + \beta}} \dif x}
	= \Cl{cte-803} \int_{r}^{\infty} \frac{1}{\rho^{1 + \beta}} \biggl( \fint_{B_{\rho}(a)}{f} \biggr) \dif\rho -  \frac{\Cl{cte-803-bis}}{r^{\beta}}\fint_{B_{r}(a)}{f},
	\end{equation}
	for some constants \(\Cr{cte-803}, \Cr{cte-803-bis} > 0\).
	Indeed, by the integration formula in polar coordinates,
	\[{}
	\int_{\R^{d} \setminus B_{r}(a)}{\frac{f(x)}{\abs{x - a}^{d + \beta}} \dif x}
	 = \int_{r}^{\infty}\biggl( \int_{\partial B_{\rho}(a)} \frac{f(x)}{\abs{x - a}^{d + \beta}} \dif\sigma(x)  \biggr) \dif\rho{}
	 = \int_{r}^{\infty} \frac{1}{\rho^{d + \beta}} \biggl( \frac{\dif}{\dif\rho} \int_{B_{\rho}(a)} f \biggr) \dif\rho.
	\]
	One then gets \eqref{eq-738} by integration by parts.
	To conclude, it suffices to apply Lemma~\ref{lemmaLimit} with \(h(t) = \fint_{B_{t}(a)}{f}\) for \(t > 0\).
\end{proof}

\resetconstant
\begin{proof}[Proof of Theorem~\ref{theoremRieszLebesgue}] ``\(\Longrightarrow\)''.
	Let \(a\) be a Lebesgue point of \(V_{s}\).{}
	We first show that the limit \eqref{eq-328} holds.
	To this end, denote by \(\alpha\) the precise representative of \(V_{s}\) at \(a\).{}
	When \(0 < s \le d - 1\), we deduce from Proposition~\ref{lemmaEstimateDensity} that
	\[
	\limsup_{r \to 0}{\frac{m(B_{r}(a))}{r^{s}}}
	\le C \lim_{r \to 0}{\fint_{B_{2r}(a)}{\abs{V_{s} - \alpha}}}
	= 0,
	\]
	which implies \eqref{eq-328}.
	When \(d - 1 < s < d\)\,, we apply Proposition~\ref{lemmaEstimateDensityNew}.
	In this case, Lemma~\ref{lemmaComputation} with \(f = |V_{s} - \alpha|\) and \(\beta = d - s\) handles the additional term in the estimate as \(r \to 0\) and we get \eqref{eq-328} as before.	
	
	We next recall that
	\[{}
	\alpha = \lim_{r \to 0}{\fint_{B_{r}(a)}{V_{s}}}.
	\]
	Moreover, by estimate \eqref{eq-334} in the proof of Theorem~\ref{theoremRieszLebesgue}, we have
	\[
	\biggabs{\fint_{B_{r}(a)} V_{s} - \int_{\R^{d} \setminus B_{2r}(a)} \frac{a - y}{\abs{a - y}^{s + 1}} \dif m(y)}
	\le \C \biggl( r \int_{2r}^{\infty} \frac{m(B_{t}(a))}{t^{s + 2}} \dif t + \frac{m(B_{2r}(a))}{r^{s}} \biggr){}.
	\]
	As \(r \to 0\), using \eqref{eq-328} we deduce from Lemma~\ref{lemmaLimit} that the right-hand side converges to zero and then
	\[{}
	\alpha = \lim_{r \to 0}{\fint_{B_{r}(a)}{V_{s}}} = \lim_{r \to 0}{\int_{\R^{d} \setminus B_{2r}(a)} \frac{a - y}{\abs{a - y}^{s + 1}} \dif m(y)}.
	\]
	Thus, by definition, \(\alpha\) is the principal value of \(V_{s}\) at \(a\).
\end{proof}

\section{The capacity $\kappa_{s}$ and proof of Theorem~\ref{theoremCapacity}}
\label{sectionCapacity}

	We begin by showing that \(\kappa_{s}\) is subadditive:
	
\begin{proposition}
	\label{propositionSubadditivity}
	For all compact subsets \(E_{1}, E_{2} \subset \R^{d}\),{}
	\[{}
	\kappa_{s}(E_{1} \cup E_{2})
	\le \kappa_{s}(E_{1}) + \kappa_{s}(E_{2}).
	\]
\end{proposition}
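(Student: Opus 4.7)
The plan is to decompose any admissible measure for \(E_1 \cup E_2\) into two pieces, one essentially concentrated on \(E_1\) and the other on \(E_2\), and to check that each piece individually satisfies the two conditions (a) and (b) in the definition of \(\kappa_s\). Once this is done, adding the masses and taking the supremum over all admissible measures yields the result.

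Concretely, I would fix a finite nonnegative Borel measure \(\mu\) supported in \(E_1 \cup E_2\) that satisfies (a) and (b), and set
\[
\mu_1 \vcentcolon= \mu|_{E_1}
\quad\text{and}\quad
\mu_2 \vcentcolon= \mu|_{E_2 \setminus E_1}.
\]
Since \(E_1\) and \(E_2\) are compact, these restrictions are well defined and
\(\mu = \mu_1 + \mu_2\) with \(\supp{\mu_i} \subset E_i\). Property~(a) is inherited immediately: for any Borel set \(A\) with \(\sigma\)-finite \(\cH^s\) measure, \(\mu_i(A) \le \mu(A) = 0\).

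The main obstacle is property (b). For a function \(f \in (L^1 \cap L^2)(\mu_i)\), I would extend \(f\) by zero off the set where \(\mu_i\) is concentrated, so that \(f \mu_i = \widetilde f \mu\) for a suitable \(\widetilde f \in (L^1 \cap L^2)(\mu)\) with \(\norm{\widetilde f}_{L^{2}(\mu)} = \norm{f}_{L^{2}(\mu_{i})}\). The maximal Riesz transform \(R_{s}^{*}\) depends only on the measure in the integrand, hence \(R_{s}^{*}(f\mu_{i}) = R_{s}^{*}(\widetilde f \mu)\) pointwise on \(\R^{d}\). Since \(\mu_i \le \mu\), integrating this equality against \(\mu_i\) yields
\[
\norm{R_{s}^{*}(f\mu_{i})}_{L^{2}(\mu_{i})}
\le \norm{R_{s}^{*}(\widetilde f \mu)}_{L^{2}(\mu)}
\le \norm{\widetilde f}_{L^{2}(\mu)}
= \norm{f}_{L^{2}(\mu_{i})},
\]
where the second inequality uses admissibility of \(\mu\). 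Hence each \(\mu_i\) is admissible for \(E_i\).

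To conclude, I would observe that
\[
\mu(\R^{d})
= \mu_{1}(\R^{d}) + \mu_{2}(\R^{d})
\le \kappa_{s}(E_{1}) + \kappa_{s}(E_{2}),
\]
and then take the supremum over all admissible \(\mu\) for \(E_1 \cup E_2\). The only delicate points are the bookkeeping for the support of \(\mu_2\) (which lies in the Borel set \(E_2 \setminus E_1\) and hence in the closed set \(E_2\)) and the identification \(f\mu_i = \widetilde f \mu\) that lets one transfer admissibility from \(\mu\) to its restrictions; once these are in place, the argument reduces to additivity of total mass.
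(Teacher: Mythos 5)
Your proof is correct and follows essentially the same route as the paper: decompose an admissible measure for \(E_{1} \cup E_{2}\) as \(\mu\lfloor_{E_{1}} + \mu\lfloor_{E_{2} \setminus E_{1}}\), check that each restriction inherits properties (a) and (b) by writing \(f\mu_{i} = (f\chi_{A_{i}})\mu\) and using \(\mu_{i} \le \mu\), and then add the masses and take the supremum.
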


The capacity \(\kappa_{s}\) is equivalent to another one that was known to be semi-additive, that is, the estimate above is verified with a constant \(C \ge 1\); see p.~3643 in \cite{RuizdeVilla-Tolsa} and Section~2 in \cite{Prat:2012}.

Before proving the proposition, we start with the following observation:
	If \(\mu\) is a Borel measure in \(\R^{d}\) that is admissible in the definition of \(\kappa_{s}(E)\) and if \(F \subset E\) is a compact subset, then, for any Borel subset \(A \subset F\), the Borel measure \(\mu\lfloor_{A}\) defined by
	\[{}
	\mu\lfloor_{A}(B) = \mu(A \cap B)
	\]
	is admissible for \(\kappa_{s}(F)\).{}
	Indeed, it suffices to verify the \(L^{2}\) boundedness of the maximal Riesz transform.
	Since \(R_{s}^{*}\) is bounded in \(L^{2}(\mu)\), we can estimate 
	\[{}
	\int_{\R^{d}}{\abs{R_{s}^{*}(f\mu\lfloor_{A})}^{2} \dif \mu\lfloor_{A}}
	 \le \int_{\R^{d}}{\abs{R_{s}^{*}(f\chi_{A} \mu)}^{2} \dif \mu}
	 \le \int_{\R^{d}}{\abs{f\chi_{A}}^{2} \dif \mu}
	= \int_{\R^{d}}{\abs{f}^{2} \dif \mu\lfloor_{A}},
	\]
	which justifies our assertion.

\begin{proof}[Proof of Proposition~\ref{propositionSubadditivity}]
	Let \(\mu\) be a nonnegative Borel measure supported in \(E_{1} \cup E_{2}\) that satisfies (a) and (b) 
	in the definition of $\kappa_s$ for this set. 
	By the computation above, the measure \(\mu\lfloor_{E_{1}}\) is admissible for \(\kappa_{s}(E_{1})\) and \(\mu\lfloor_{E_{2} \setminus E_{1}}\) is admissible for \(\kappa_{s}(E_{2})\).{}
	Thus,
	\[{}
	\mu(\R^{d})
	= \mu(E_{1} \cup E_{2}) 
	= \mu(E_{1}) + \mu(E_{2} \setminus E_{1}) 
	\le \kappa_{s}(E_{1}) + \kappa_{s}(E_{2})
	\] 
	and it suffices to take the supremum with respect to \(\mu\).
\end{proof}

It follows readily from (a) in the definition of \(\kappa_{s}\) that 
\(\kappa_{s}(E) = 0\) for every compact set \(E\) with \(\cH^{s}(E) < \infty\).{}
Dimension \(s\) is critical for \(\kappa_{s}\)\,:{}

\begin{proposition}
	\label{propositionCapacityHausdorffDimension}	
	If \(E \subset \R^{d}\) is compact and \(\kappa_{s}(E) = 0\), then \(\dim_{\mathcal{H}}{(E)} \le s\).
\end{proposition}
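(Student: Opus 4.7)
The approach is to prove the contrapositive: if $\dim_{\mathcal{H}}{(E)} > s$, then $\kappa_{s}(E) > 0$.{} Choose an exponent $t$ with $s < t \le d$ for which $\mathcal{H}^{t}(E) > 0$, and invoke Frostman's lemma to obtain a nonzero nonnegative Borel measure $\mu$ supported in $E$ satisfying the uniform growth bound
\[{}
\mu(B_{r}(x)) \le r^{t}
\quad \text{for every \(x \in \R^{d}\) and \(r > 0\).}
\]
The plan is to verify that, up to a harmless rescaling $\mu \mapsto \mu/M$, the measure $\mu$ is admissible in the definition of $\kappa_{s}(E)$, thereby forcing $\kappa_{s}(E) \ge \mu(\R^{d})/M > 0$.

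Condition~(a) follows directly from the growth estimate together with $t > s$. Indeed, for any Borel set $A$ of $\sigma$-finite $\mathcal{H}^{s}$ measure, write $A = \bigcup_{n} A_{n}$ with $\mathcal{H}^{s}(A_{n}) < \infty$\,; for each $\delta > 0$ cover $A_{n}$ by balls $B_{r_{i}}(x_{i})$ with $r_{i} < \delta$ and $\sum_{i} r_{i}^{s} \le \mathcal{H}^{s}(A_{n}) + 1$. Then
\[{}
\mu(A_{n})
\le \sum_{i} r_{i}^{t}
\le \delta^{t - s} \sum_{i} r_{i}^{s}
\le \delta^{t - s}(\mathcal{H}^{s}(A_{n}) + 1),
\]
and the right-hand side tends to zero as $\delta \to 0$, giving $\mu(A) = 0$.

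For condition~(b), observe that $|(x - y)/|x - y|^{s + 1}| = 1/|x - y|^{s}$, so one has the pointwise majoration $R_{s}^{*}(f\mu)(x) \le T(|f|)(x)$, where
\[{}
Tg(x) \vcentcolon= \int_{\R^{d}}{\frac{g(y)}{|x - y|^{s}} \dif\mu(y)}.
\]
A standard dyadic decomposition based on the annuli $\{2^{-k-1}D < |x - y| \le 2^{-k} D\}$ with $D = \diam{E}$, combined with the Frostman bound, yields the uniform estimate
\[{}
\int_{\R^{d}} \frac{\dif\mu(y)}{|x-y|^{s}} \le M
\quad \text{for every \(x \in \R^{d}\),}
\]
where the convergence of the resulting geometric series $\sum_{k} 2^{-k(t-s)}$ uses essentially the strict inequality $t > s$, and $M$ depends only on $s$, $t$ and $D$. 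Since the kernel $1/|x - y|^{s}$ is symmetric in $x$ and $y$, Schur's test then gives the $L^{2}(\mu)$-boundedness of $T$, and hence of $R_{s}^{*}$, with operator norm at most $M$. A direct scaling computation shows that the measure $\mu/M$ preserves (a) and the Frostman bound while making the operator norm of $R_{s}^{*}$ on $L^{2}(\mu/M)$ at most $1$, so $\mu/M$ is admissible for $\kappa_{s}(E)$ and the conclusion follows.

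The step I expect to be the main obstacle is the verification of~(b), in particular the uniform bound on $\int 1/|x - y|^{s} \dif\mu(y)$ and the subsequent rescaling needed to match the normalization $\|R_{s}^{*}(f\mu)\|_{L^{2}(\mu)} \le \|f\|_{L^{2}(\mu)}$ with constant exactly~$1$. Both points are routine once one sees that $t > s$ gives summability and that $R_{s}^{*}(f(c\mu)) = c R_{s}^{*}(f\mu)$ allows one to trade off a large operator norm for a smaller total mass.
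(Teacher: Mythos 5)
Your proof is correct and follows essentially the same route as the paper: both argue by contraposition via Frostman's lemma, verify condition~(a) directly, and establish condition~(b) by bounding $\sup_{x}\int |x-y|^{-s}\,\dif\mu(y)$ and then applying Schur's test, finishing with a rescaling of $\mu$ to normalize the operator norm to $1$. The only cosmetic differences are that you bound the Schur constant by a dyadic annulus sum while the paper uses Cavalieri's principle, and you check~(a) by an explicit covering argument while the paper invokes the Hausdorff content $\mathcal{H}^{t}_{\infty}$.
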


\resetconstant
\begin{proof}
	It suffices to prove that if \(E \subset \R^{d}\) is a compact set with \(\cH^{t}(E) > 0\) for some \(t > s\), then \(\kappa_{s}(E) > 0\).{}
	By Frostman's lemma, there exists a nontrivial finite nonnegative Borel measure \(\mu\) supported by \(E\) such that
	\begin{equation}
	\label{eq-546}
	\mu(B_{r}(x))
	\le r^{t}
	\quad \text{for every \(x \in \R^{d}\) and \(r > 0\).}
	\end{equation}
	Hence, \(\mu \le \C \cH^{t}_{\infty}\)\,, where \(\cH^{t}_{\infty}\) is the Hausdorff content of dimension \(t\).{}
	For every set \(A \subset \R^{d}\) with \(\sigma\)-finite \(\cH^{s}\) measure, we have \(\cH^{t}_{\infty}(A) = 0\). 
	Then, \(\mu(A) = 0\) and the first requirement in the definition of \(\kappa_{s}(E)\) is satisfied.
	
	Next, 
	for every \(f \in (L^{1} \cap L^{2})(\mu)\) and every \(x \in \R^{d}\),
	\[{}
	\abs{R_{s}^{*}(f\mu)(x)}
	\le \int_{\R^{d}} \frac{\abs{f(y)}}{\abs{x - y}^{s}} \dif \mu(y).
	\]
	Then, by Young's inequality,
	\[{}
	\norm{R_{s}^{*}(f\mu)}_{L^{2}(\mu)}
	\le \biggl(\int_{\R^{d}} \frac{\dif \mu(z)}{\abs{z}^{s}} \biggr) \norm{f}_{L^{2}(\mu)}.
	\]
	By Cavalieri's principle,
	\[{}
	\int_{\R^{d}}{\frac{\dif \mu(z)}{\abs{z}^{s}}} 
	= s \int_{0}^{\infty} \frac{\mu(B_{r}(0))}{r^{s + 1}} \dif r.
	\]
	Since \(\mu\) is finite and satisfies \eqref{eq-546} with exponent \(t > s\),{}
	\[{}
	\int_{0}^{\infty} \frac{\mu(B_{r}(0))}{r^{s + 1}} \dif r
	\le \int_{0}^{1} \frac{r^{t}}{r^{s + 1}} \dif r + \int_{1}^{\infty} \frac{\mu(\R^{d})}{r^{s + 1}} \dif r
	< \infty.
	\]
	Therefore,
	\[{}
	\norm{R_{s}^{*}(f\mu)}_{L^{2}(\mu)}
	\le \Cl{cte-423} \norm{f}_{L^{2}(\mu)},
	\]
	which implies that \(\mu/\Cr{cte-423}\) is a nontrivial admissible measure in the definition of \(\kappa_{s}(E)\). 
	In particular, \(\kappa_{s}(E) > 0\). 
\end{proof}

The proof of Theorem~\ref{theoremCapacity} relies on Theorem~1.6 of \cite{Mattila-Verdera} concerning the existence of principal values for the Riesz transform.
We recall that from \cite{Mattila-Verdera} one knows that, for every finite nonnegative Borel measure \(\mu\) in \(\R^{d}\) such that
\begin{equation}
\label{eq-692}
	\lim_{r \to 0}{\frac{\mu(B_{r}(x))}{r^{s}}}
	= 0
	\quad \text{for \(\mu\)-almost every \(x \in \R^{d}\)}
\end{equation}
and
\begin{equation}
\label{eq-698}
	\norm{R_{s}^{*}(f\mu)}_{L^{2}(\mu)}
	\le C \norm{f}_{L^{2}(\mu)}
	\quad \text{for every \(f \in (L^{1} \cap L^{2})(\mu)\)},
\end{equation}
the principal value of 
\begin{equation}
\label{eq-705}
	x \longmapsto \int_{\R^{d}} \frac{x - y}{\abs{x - y}^{s + 1}} \dif\mu(y)
\end{equation}
	exists \(\mu\)-almost everywhere.

	We observe that in the statement of Theorem~1.6 in \cite{Mattila-Verdera} one also assumes
	\[{}
	\mu(B_{r}(x))
	\le C' r^{s} \quad \text{for every \(x \in \R^{d}\) and \(r > 0\),}
	\]
	but such a property is a consequence of the uniform boundedness of the Riesz transform given by \eqref{eq-698} and the 
	fact that, by \eqref{eq-692}, the measure \(\mu\) cannot charge points; see Proposition~1.4 in Part~III of \cite{David}.

	We also need a standard property from Measure Theory:
	
\begin{proposition}
	\label{propositionMattilaVerdera}
	If \(\mu\) is a finite nonnegative Borel measure in \(\R^{d}\) such that \(\mu(A) = 0\) for every Borel set \(A \subset \R^{d}\) with \(\sigma\)-finite \(\cH^{s}\) measure, then \eqref{eq-692} holds.
\end{proposition}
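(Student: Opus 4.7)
The plan is to reduce the statement to a standard upper density estimate: for every $\lambda>0$, the set
\[
A_{\lambda} \vcentcolon= \Bigl\{x \in \R^{d} : \limsup_{r \to 0}\frac{\mu(B_{r}(x))}{r^{s}} > \lambda \Bigr\}
\]
satisfies $\cH^{s}(A_{\lambda}) \le C \mu(A_{\lambda})/\lambda$ for some dimensional constant $C>0$. Once this is proved, since $\mu$ is finite we get $\cH^{s}(A_{\lambda}) < \infty$, so $A_{\lambda}$ has $\sigma$-finite $\cH^{s}$ measure and therefore $\mu(A_{\lambda}) = 0$ by hypothesis. The exceptional set for \eqref{eq-692} is $\bigcup_{n \ge 1} A_{1/n}$, which by countable subadditivity also has $\mu$-measure zero, giving the desired conclusion.

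To establish the density estimate I would use a Vitali-type covering argument. Fix $\lambda>0$ and an open set $U \supset A_{\lambda}$. For every $x \in A_{\lambda}$ and every $\eta > 0$, by the definition of the $\limsup$ there exists $r \in (0,\eta)$ such that
\[
\overline{B_{r}(x)} \subset U
\quad\text{and}\quad \mu(B_{r}(x)) > \lambda r^{s}.
\]
The family of all such closed balls is a Vitali covering of $A_{\lambda}$, so the $5r$-covering lemma produces a countable disjoint subfamily $\{\overline{B_{r_{i}}(x_{i})}\}_{i}$ with $r_{i} < \eta$ and $A_{\lambda} \subset \bigcup_{i} B_{5r_{i}}(x_{i})$. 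Using the definition of Hausdorff content at scale $10\eta$,
\[
\cH^{s}_{10\eta}(A_{\lambda})
\le \sum_{i}(10r_{i})^{s}
\le \frac{10^{s}}{\lambda}\sum_{i} \mu(B_{r_{i}}(x_{i}))
\le \frac{10^{s}}{\lambda}\,\mu(U),
\]
where the disjointness of the selected balls is used in the last step. Letting $\eta \to 0$ gives $\cH^{s}(A_{\lambda}) \le (10^{s}/\lambda)\mu(U)$, and the outer regularity of the finite Borel measure $\mu$ (approximating $A_{\lambda}$ from outside by open sets) yields $\cH^{s}(A_{\lambda}) \le (10^{s}/\lambda)\mu(A_{\lambda})$.

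The potentially delicate points are making sure that $A_{\lambda}$ is Borel (it is, since it is defined through a $\limsup$ of Borel functions of $x$ — one may replace $r$ by rationals to see this) and that the Vitali covering produces a family to which the disjoint-sum argument applies; both are standard. No deeper tool than the $5r$-covering lemma, outer regularity of finite Borel measures, and the definition of Hausdorff measure is needed, so I do not anticipate a genuine obstacle beyond organizing these ingredients carefully.
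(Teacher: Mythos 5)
Your argument is correct and takes a genuinely different route from the paper. The paper invokes Proposition~3.2 of \cite{Ponce}, an Egorov-type result asserting that for every \(\epsilon>0\) there is a compact set \(K\) with \(\mu(\R^{d}\setminus K)\le\epsilon\) on which the restriction \(\mu\lfloor_{K}\) has a uniform upper \(s\)-density bound \(\mu\lfloor_{K}(B_{r}(x))\le c\,r^{s}\) for all \(x\) and all small \(r\); this uniform vanishing is then transferred from \(\mu\lfloor_{K}\) to \(\mu\) via the Besicovitch Differentiation Theorem, giving \eqref{eq-692} for \(\mu\)-a.e.\ \(x\in K\), and \(\epsilon\to 0\) finishes. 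You instead prove directly the classical upper-density estimate \(\cH^{s}(A_{\lambda})\le C\lambda^{-1}\mu(A_{\lambda})\) via the \(5r\)-covering lemma and outer regularity of \(\mu\), and then feed the resulting finiteness of \(\cH^{s}(A_{\lambda})\) into the hypothesis on \(\mu\) to get \(\mu(A_{\lambda})=0\). Your approach is self-contained, bypassing both the external regularity theorem and Besicovitch differentiation; the paper's is shorter on the page but borrows more. Two small remarks: the refined bound \(\cH^{s}(A_{\lambda})\le C\lambda^{-1}\mu(A_{\lambda})\) is not actually needed, since the cruder \(\cH^{s}(A_{\lambda})\le C\lambda^{-1}\mu(\R^{d})<\infty\) already shows \(A_{\lambda}\) has finite \(\cH^{s}\) measure, which is all the hypothesis requires; and the Borel measurability of \(A_{\lambda}\) is indeed handled by restricting to rational radii, because monotonicity of \(r\mapsto\mu(B_{r}(x))\) and continuity of \(r\mapsto r^{s}\) imply the two \(\limsup\)'s coincide.
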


\begin{proof}
	Since \(\mu\) does not charge sets with finite \(\mathcal{H}^{s}\) measure, by Proposition~3.2 in \cite{Ponce} for every \(\epsilon > 0\) one finds a compact set \(K \subset \R^{d}\) with \(\mu(\R^{d} \setminus K) \le \epsilon\) such that, for every \(c > 0\), there exists \(\delta > 0\) with
	\begin{equation*}
	\mu\lfloor_{K}(B_{r}(x))
	\le c r^{s} \quad \text{for every \(x \in \R^{d}\) and \(0 < r \le \delta\).}
	\end{equation*}
	Thus,
\begin{equation}
\label{eq-1009}
	\lim_{r \to 0}{\frac{\mu\lfloor_{K}(B_{r}(x))}{r^{s}}}
	= 0
	\quad \text{for every \(x \in \R^{d}\).}
\end{equation}
	On the other hand, by the Besicovitch Differentiation Theorem,
\begin{equation}
\label{eq-1016}
	\lim_{r \to 0}{\frac{\mu\lfloor_{K}(B_{r}(x))}{\mu(B_{r}(x))}}
	= 1
	\quad \text{for \(\mu\)-almost every \(x \in K\).}
\end{equation}
	Combining \eqref{eq-1009} and \eqref{eq-1016}, one gets the limit in \eqref{eq-692} for \(\mu\)-almost every \(x \in K\).
	Since \(\mu(\R^{d} \setminus K) \le \epsilon\) and \(\epsilon > 0\) is arbitrary, the conclusion follows.
\end{proof}
	
\begin{proof}[Proof of Theorem~\ref{theoremCapacity}]
	Let \(\mu\) be a finite nonnegative Borel measure that is admissible for \(\kappa_{s}(E)\), where \(E\) is any compact subset of \(\mathcal{E}(V_{s})\).{}
	By Theorem~\ref{theoremRieszLebesgue}, we can write this set as \(E = E_{1} \cup E_{2}\), where
	\[{}
	E_{1} 
	\vcentcolon= \biggl\{ x \in E : \limsup_{r \to 0}{\frac{\mu(B_{r}(x))}{r^{s}}} > 0 \biggr\}
	\]
	and
	\[{}
	E_{2} 
	\vcentcolon= \bigl\{ x \in E : \mathrm{p.v.}\,V_{s}(x) \text{ does not exist} \bigr\}.
	\]
	On one hand, since \(E_{1}\) is \(\sigma\)-finite for \(\mathcal{H}^{s}\) and \(\mu\) does not charge those sets,
	\(
	\mu(E_{1}) = 0.
	\)
	On the other hand, from Proposition~\ref{propositionMattilaVerdera} above and Theorem~1.6 in \cite{Mattila-Verdera} we know that the principal value of \(V_{s}\) exists \(\mu\)-almost everywhere in \(\R^{d}\).
	Hence,
	\(
	\mu(E_{2}) = 0.
	\)
	We thus have 
	\[{}
	\mu(\R^{d}) 
	= \mu(E) 
	\le \mu(E_{1}) + \mu(E_{2})
	= 0,
	\] 
	for every measure \(\mu\) that is admissible for \(\kappa_{s}(E)\).
	Therefore, \(\kappa_{s}(E) = 0\).
\end{proof}

	As a final observation, \eqref{eqHausdorffDimension} can also be deduced as a consequence of
	Theorem~\ref{theoremCapacity}.
	Indeed, if we had \(\dim {\mathcal{E}(V_{s})} > s\), then for any \(s < t < \dim{\mathcal{E}(V_s)}\) one could find a
	compact subset \(F \subset \mathcal{E}(V_{s})\) such that \(\mathcal{H}^{t}(F) > 0\); see Theorem~8.13 in \cite{Mattila}.
	Such a property would then contradict Proposition~\ref{propositionCapacityHausdorffDimension} above.
	

\section{Planar Cantor sets and the Cauchy integral}	
\label{sectionCantor}

We investigate in this section the existence of principal values of the Cauchy integral for the intrinsic probability measure on Cantor sets in \(\R^{2}\) which we identify with the complex plane \(\CC\).
We first review the definition of the Cantor sets we shall be dealing with. 
Take a sequence $\Lambda = (\lambda_n)_{n \in \N_{*}}$  with $0 <\lambda_n < 1/2$ for every $n$, where \(\N_{*} = \N \setminus \{0\}\).
Start with the square $S_{0} = [0, 1]^{2}$ with sides parallel to the coordinate axes and side length $\sigma_{0} \vcentcolon= 1$.
Take $4$ squares of side length $\sigma_{1} \vcentcolon= \lambda_1$ inside \(S_{0}\) with sides parallel to the coordinate axes,  each containing a vertex of $S_{0}$. Repeat the operation in each of the $4$ squares obtained with $\lambda_2$ as the compression factor. 
We then obtain $16$ squares of side length $\sigma_{2} \vcentcolon= \lambda_1 \lambda_2$.
Repeating inductively the process we get at the $n$th generation $4^n$ squares of side length 
\[{}
\sigma_n{}
\vcentcolon =\lambda_1\cdots\lambda_n.
\]
Denote these squares by $S_{n, \alpha}$ with \(\alpha \in \{1, 2, \dots, 4^n\}\). 
The Cantor set associated with the sequence \(\Lambda\) is 
\begin{equation}\label{ca}
E(\Lambda) \vcentcolon= \bigcap_{n=1}^\infty \bigcup_{\alpha = 1}^{4^n} S_{n, \alpha}.
\end{equation}
The intrinsic probability measure $\mu$ on $E(\Lambda)$ that we consider has the property 
\[{}
\mu(S_{n, \alpha}) = 1/4^n
\quad \text{for every $n$ and $\alpha$.}
\]

\begin{example}
When $\lambda_n=1/4$ for every $n$, \(E(\Lambda)\) is called the Garnett-Ivanov set.
In this case, the intrinsic measure $\mu$ is a positive multiple of the one-dimensional Hausdorff measure on $E(\Lambda)$.{}
This is the simplest example of a compact set having positive length but zero analytic capacity. 
Another interesting family of Cantor sets \(E(\Lambda)\) arises setting, for a given $0 < \beta < \infty$ and for every \(n\) sufficiently large,
\begin{equation}\label{cabe}
\lambda_n{}
= \frac{1}{4}\Bigl(1+\frac{\beta}{n} \Bigr).
\end{equation}
In this case, one has $\sigma_n \sim {n^\beta}/{4^n}$.
\end{example}

When the intrinsic measure \(\mu\) has zero linear density, that is
\[{}
\frac{\mu(S_{n, \alpha})}{\sigma_{n}}
= \frac{1}{4^{n}\sigma_{n}}
\to 0
\quad \text{as \(n \to \infty\),}
\]
assumption \eqref{eq-328} of Theorem~\ref{theoremRieszLebesgue} with \(s = 1\) is satisfied by \(\mu\).{}
Hence, \(a \in E(\Lambda)\) is a Lebesgue point of \(C(\mu)\) if and only if \(C(\mu)\) has a principal value at \(a\).{}
Using the result concerning \(\mu\)-almost everywhere existence of principal values from \cite{Mattila-Verdera}, one has the following

\begin{proposition}
	If 
	\begin{equation}\label{eq1096}
	\sum_{n=1}^\infty \frac{1}{(4^n \sigma_n)^2} < \infty,
	\end{equation}
	then \(C(\mu)\) has a principal value for \(\mu\)-almost every point in \(E(\Lambda)\).
\end{proposition}

\begin{proof}
	By Theorem~1 in \cite{Mateu-Tolsa-Verdera}, assumption \eqref{eq1096} is equivalent to the fact that \(E(\Lambda)\) has positive analytic capacity, which is also equivalent to \(L^{2}(\mu)\) boundedness of \(C(\mu)\).{}
	The conclusion then readily follows from Theorem~1.6 in \cite{Mattila-Verdera}.
\end{proof}

Since the Garnett-Ivanov set \(E(\Lambda)\) has positive linear density, by Theorem~\ref{theoremRieszLebesgue} the Cauchy integral \(C(\mu)\) has no Lebesgue points on \(E(\Lambda)\).{}
In fact, failure of Theorem~\ref{theoremRieszLebesgue} in this case is twofold and also concerns the non-existence of principal values of \(C(\mu)\) on \(E(\Lambda)\):

\begin{proposition}
\label{propositionGI}
If \(E(\Lambda)\) is the Garnett-Ivanov set, then \(C(\mu)\) does not have a principal value at any point of \(E(\Lambda)\).
\end{proposition}

\begin{proof}
Let $a \in E(\Lambda)$ and let $Q_{n}$ denote the unique square of generation $n$ that contains $a$.
One has $E(\Lambda) \setminus B_{\sqrt{2}/4^n}(a)= E(\Lambda)\setminus Q_{n}$.
Hence, if the principal value of the Cauchy integral exists
at $a$, then we get that the limit
\begin{equation*}\label{}
\lim_{n\to \infty} \int_{E(\Lambda) \setminus Q_{n}} \frac{\dif\mu(z)}{z-a} 
\end{equation*}
also exists. 
In particular,
\begin{equation*}\label{}
\lim_{n\to \infty} \int_{Q_{n}\setminus Q_{n+1}} \frac{\dif\mu(z)}{z-a} = 0.
\end{equation*}
Making  a dilation of factor $4^{n} = 1/\sigma_{n}$  followed by an appropriate translation we transform $Q_{n}$ into the initial square $S_{0}$\,, the point $a$ is mapped to a point $b_n$ in $E(\Lambda)$, and $Q_{n + 1}$ is mapped into the square $\widetilde Q_{1}$ of first generation that contains $b_n$. 
Thus, since \(\mu(Q_{n}) = \sigma_{n} = 1/4^{n}\),
\begin{equation*}
\int_{Q_{n} \setminus Q_{n + 1}} \frac{\dif\mu(z)}{z-a} 
= \frac{1}{4^{n}\sigma_{n}} \int_{S_{0} \setminus \widetilde Q_{1}} \frac{\dif\mu(z)}{z-b_n}
= \int_{S_{0} \setminus \widetilde Q_{1}} \frac{\dif\mu(z)}{z-b_n}.
\end{equation*}
To reach a contradiction, we only have to check that this last integral is bounded away from $0$. 
It is more convenient to look at the conjugate kernel, 
since 
$$\frac{1}{\overline{z-b_n}}= \frac{z-b_n}{|z-b_n|^2}.$$ 

Let us assume for simplicity that \(Q_{n + 1}\) is the upper-right square inside \(Q_{n}\).{}
Then, \(\widetilde Q_{1}\) has the same relative position inside \(S_{0}\).
One verifies geometrically in this case that if $\vec{u} \vcentcolon = (-1, -1)$, then the half-space $\bigl\{z \in \R^{2} : \langle z-b_n, \vec{u} \rangle > 0\bigr\}$ contains $S_{0} \setminus \widetilde Q_{1}$, where the angle brackets mean scalar product in the plane.
Therefore, by linearity of the integral,
\begin{equation*}\label{}
\Bigl\langle \int_{S_{0} \setminus \widetilde Q_{1}} \frac{\dif\mu(z)}{\overline{z-b_n}}, \vec{u} \Bigr\rangle 
\ge \delta >0,
\end{equation*}
for some \(\delta > 0\) independent of \(n\).
A similar argument holds when \(Q_{n + 1}\) is any other square of generation \(n + 1\) in \(Q_{n}\)\,, by choosing \(\vec{u}\) accordingly.
We then deduce that \(E(\Lambda)\) has no point where \(C(\mu)\) has a principal value.
\end{proof}

When \eqref{eq1096} fails, one has the following property
concerning the non-existence of principal values of \(C(\mu)\) on \(E(\Lambda)\):{}

\begin{proposition}
\label{propositionMeasureP}
If 
\begin{equation}\label{cappos}
\sum_{n=1}^\infty \frac{1}{(4^n \sigma_n)^2} = \infty,
\end{equation}
then \(C(\mu)\) does not have a principal value for \(\mu\)-almost every point in \(E(\Lambda)\).
\end{proposition}

\resetconstant
\begin{proof}
We rely on a strong result of Tolsa \cite{Tolsa:book}*{Section~8.15}, who proved in a much more general context that the set
\begin{equation}
	\label{eqSetP}
P(\Lambda) \vcentcolon= \bigl\{z \in E(\Lambda) : \mathrm{p.v.}\, C(\mu)(z)\ \text{exists} \bigr\}
\end{equation}
is the union of a set of zero $\mu$ measure and a sequence of subsets $F_l$ in $E(\Lambda)$ such that $\mu\lfloor_{F_l}$ has finite Menger curvature, that is
\begin{equation}\label{mc}
\iiint_{F_{l}^3} \frac{\dif\mu(z) \dif\mu(w) \dif\mu(\zeta)}{R^{2}(z, w, \zeta)} < \infty,
\end{equation}
where $R(z, w, \zeta)$ is the radius of the circle through $z$, $w$ and $\zeta$\,; when two of these point coincide, $R(z, w, \zeta)= \infty$.{}

For each \(l \in \N\), let \(a \in E(\Lambda)\) be a point of density of $F_{l}$ for \(\mu\) with respect to squares, that is
\begin{equation*}
\lim_{r \to 0}{\frac{\mu(Q_{r}(a) \cap F_{l})}{\mu(Q_{r}(a))}}
= 1.
\end{equation*}
Here, \(Q_{r}(a)\) denotes the square centred at \(a\) with side length \(2r\).
The set of points of density coincides, modulo a set of vanishing $\mu$ measure, with the set of points $a$ satisfying
\begin{equation}
	\label{eqDensity2}
\lim_{n \to \infty}{\frac{\mu(Q_n \cap F_{l})}{\mu(Q_n)}}
= 1,
\end{equation}
where $Q_n$ is the square of generation $n$ that contains $a$. 
This follows by considering the maximal dyadic function
\begin{equation*}
M(f)(a)=\sup_{n}{\frac{1}{\mu(Q_n)} \int_{Q_n} |f| \dif\mu} \quad \text{for \(f\in L^1(\mu)\).}
\end{equation*}
Indeed, one first shows that this maximal function satisfies a weak type inequality, from which the argument can then be completed in the standard way.

Inside $Q_{n}$\,, let us take two squares of generation $n+1$ different from $Q_{n+1}$\,, say $S_{n+1, 1}$ and $S_{n+1, 2}$.
For every \(n \in \N_{1} \vcentcolon= \bigl\{ n \in \N : \lambda_{n} \le 1/3 \bigr\}\), the sets $Q_{n+1}$\,, $S_{n+1, 1}$ and $S_{n+1, 2}$ are uniformly away from each other relatively to \(\sigma_{n}\)\,, that is, if \(w\) and \(\zeta\) are two points belonging to distinct sets, then
\[{}
|w - \zeta| 
\ge \sigma_{n}(1 - 2\lambda_{n})
\ge \frac{\sigma_{n}}{3}.
\]
We then have for each \(n \in \N_{1}\)\,, 
\[{}
R(a, w, \zeta) \sim \sigma_{n}
\quad \text{for every \(w \in S_{n+1, 1}\) and \(\zeta \in S_{n+1, 2}\).}
\]
As the sets \(S_{n+1, j}\) are pairwise disjoint for \(j \in \{1, 2\}\), for every \(n \in \N_{1}\) we can estimate
\begin{equation}
\label{eq1128}	
\begin{split}
\iint_{F_{l}^2} \frac{\dif\mu(w) \dif\mu(\zeta)}{R^{2}(a, w, \zeta)}
& \ge \sum_{n \in \N_{1}} \int_{S_{n+1, 1} \cap F_{l}} \int_{S_{n+1, 2} \cap F_{l}}
\frac{\dif\mu(w) \dif\mu(\zeta)}{R^{2}(a, w, \zeta)}\\
& \ge \Cl{cte-1185} \sum_{n \in \N_{1}} \frac{\mu(S_{n+1, 1} \cap F_{l})\mu(S_{n+1, 2} \cap F_{l})}{\sigma_{n}^{2}}.
\end{split}
\end{equation}

Let us assume that \(a\) satisfies \eqref{eqDensity2}.
Then, there exists \(N \in \N\) such that for \(n \ge N\) and  \(j \in \{1, 2\}\),
$$
\mu(S_{n+1, j} \cap F_{l}) 
\ge \frac{1}{2} \mu(S_{n+1, j})
= \frac{1}{2} \frac{1}{4^{n+1}}.
$$
It thus follows from \eqref{eq1128} that
\begin{equation}
\label{eq1230}
\iint_{F_{l}^2} \frac{\dif\mu(w) \dif\mu(\zeta)}{R^{2}(a, w, \zeta)}
\ge \frac{\Cr{cte-1185}}{4^{3}} \sum_{\substack{n \in \N_{1}\\ n \ge N}} \frac{1}{(4^{n} \sigma_{n})^{2}}.
\end{equation}
A simple computation shows that
\[{}
\sum_{n \in \N \setminus \N_{1}}{\frac{1}{(4^{n} \sigma_{n})^{2}}}
\le 3 \biggl(1 + \sum_{n \in \N_{1}}{\frac{1}{(4^{n} \sigma_{n})^{2}}} \biggr).
\]
From \eqref{cappos}, we then deduce that the series in \eqref{eq1230} diverges, whence so does the integral in the left-hand side.
Since this holds for $\mu$-almost all density point \(a\) of $F_{l}$\,, we deduce from \eqref{mc} that \(\mu(F_{l}) = 0\) for every \(l\).{}
Therefore, \(\mu(P(\Lambda)) = 0\).	
\end{proof}

When \(\Lambda\) is given by \eqref{cabe} for \(n\) large, one has that \eqref{cappos} holds provided that \(\beta \le 1/2\) and then \(C(\mu)\) does not have a principal value for \(\mu\)-almost every point in \(E(\Lambda)\).
We prove nevertheless in the next section that, when \(E(\Lambda)\) has linear density zero, \(C(\mu)\) has a principal value somewhere in \(E(\Lambda)\):

\begin{theorem}
	\label{theoremCantorPV}
	Let \(\Lambda\) be such that \(\sum\limits_{n=1}^{\infty}{\abs{\lambda_{n} - \lambda_{n+1}}} < \infty\).{}
	If the sequence \((4^{n}\sigma_{n})_{n \in \N}\) is nondecreasing and
	\begin{equation}
		\label{eqDensityLambda}
	\lim_{n \to \infty}{4^{n}\sigma_{n}} = \infty,
	\end{equation}
	then there exists \(a \in E(\Lambda)\) such that \(C(\mu)\) has a principal value at \(a\). 
	Equivalently, there exists a Lebesgue point of \(C(\mu)\) in \(E(\Lambda)\).
\end{theorem}

Observe that \((4^{n}\sigma_{n})_{n \in \N}\) is nondecreasing if and only if \(\lambda_{n} \ge 1/4\) for every \(n \in \N_{*}\).
We recall that a point in the Cantor set $E(\Lambda)$ is uniquely determined by a decreasing sequence of squares $Q_n$, where $Q_n$ is the square of generation $n$ that contains the point. 
The point $a$ that we use in Theorem~\ref{theoremCantorPV} is obtained by requiring that it belongs to the upper-right square (position I) of first generation, the lower-left square  (position III) of second generation and so on, alternating positions I and III at each generation. 
We call \(a\) the \emph{alternating point} of \(E(\Lambda)\).
It is chosen so that maximal cancellation is achieved in the computation of the principal value of \(C(\mu)\) at \(a\). 
We are indebted to P.~Mattila for the suggestion of considering such a point.

Theorem~\ref{theoremCantorPV} provides one with a general condition which ensures that \(P(\Lambda)\) given by \eqref{eqSetP} is non-empty.
A challenging problem is

\begin{openproblem}
	Given a sequence \(\Lambda\) satisfying \eqref{eqDensityLambda}, estimate the Hausdorff dimension of the set \(P(\Lambda)\).
\end{openproblem}

As Theorem~\ref{theoremCantorPV} concerns the intrinsic measure \(\mu\), another interesting question is

\begin{openproblem}
Does there exist a compactly supported positive Borel measure $\nu$ in \(\CC\) with
\[{}
\lim_{r \to 0}{\frac{\nu(B_{r}(a))}{r}} = 0
\quad \text{for every \(a \in \CC\)}
\] 
and such that  $C(\nu)$ does not have a principal value at any point of \(\supp{\nu}\)\,?	
In case the answer is affirmative, then \(\supp{\nu}\) would coincide with the set of non-Lebesgue points of $C(\nu)$.  
\end{openproblem}


\section{Proof of Theorem~\ref{theoremCantorPV}}
\label{sectionCantorProof}

We proceed by assuming that there exists \(\delta > 0\) such that
\begin{equation}
	\label{eq1253}
	\lambda_{n}
	\le \frac{1}{2} - \delta{}
	\quad \text{for every \(n \in \N_{*}\).}
\end{equation}
Since the sequence \(\Lambda\) converges and \(\lambda_{n} < 1/2\) for every \(n\), when \eqref{eq1253} fails one must have \(\lambda_{n} \to 1/2\).{}
This case can be easily handled by a separate argument, with a stronger conclusion, and will be briefly explained at the end of the section.

Given \(a \in E(\Lambda)\), to prove the existence of
\begin{equation}
	\label{eq1249}
\lim_{r \to 0}{\int_{\CC \setminus B_{r}(a)}}\frac{\dif\mu(z)}{z - a},
\end{equation}
it suffices to prove that the following limit exists:
\begin{equation}
	\label{eq1249bis}
\lim_{n \to \infty}{\int_{\CC \setminus Q_n}}\frac{\dif\mu(z)}{z - a},
\end{equation}
where \(Q_{n}\) is the square of generation \(n\) that contains \(a\).
To see this, one can argue as follows: Given $r > 0$, let $n$ be the largest nonnegative integer such that $B_{r}(a) \cap E(\Lambda) \subset Q_{n}$.
We then write
\begin{equation}
	\label{eq1249tri}
\int_{\CC \setminus B_{r}(a)} \frac{\dif\mu(z)}{z - a}
= \int_{\CC \setminus Q_n} \frac{\dif\mu(z)}{z - a} + \int_{Q_n \setminus B_{r}(a)} \frac{\dif\mu(z)}{z - a}.
\end{equation}
Note that
\begin{equation}
	\label{eq1249q}
\int_{Q_n \setminus B_{r}(a)} \frac{\dif\mu(z)}{\abs{z - a}}
\le \frac{\mu(Q_n)}{r}
\le \frac{1}{2\delta} \frac{1}{4^{n}\sigma_n}.
\end{equation}
Indeed, as there exists $x \in (B_{r}(a) \cap E(\Lambda)) \setminus Q_{n+1}$\,, using \eqref{eq1253} we have
\begin{equation*}
r \ge |x-a| 	
\ge \sigma_n -2\sigma_{n+1}
= \sigma_{n}(1 - 2\lambda_{n+1}) 
\ge 2 \delta \sigma_n.
\end{equation*}
Since the right-hand side of \eqref{eq1249q} converges to zero as \(n \to \infty\), we have from \eqref{eq1249tri} that the existence of the limit \eqref{eq1249bis} implies that of \eqref{eq1249}.

We investigate the existence of \eqref{eq1249bis} using the Cauchy criterion.
By additivity of the integral, for every \(m > n \ge 1\) we may write
\begin{equation}
\label{eqAdditivity}
\int_{Q_{n} \setminus Q_{m}}{\frac{\dif\mu(z)}{z - a}}
= \sum_{j = n}^{m - 1}{\int_{Q_{j} \setminus Q_{j+1}}{\frac{\dif\mu(z)}{z - a}}}.
\end{equation}
We now focus on the integrals in the right-hand side:
	
\begin{lemma}
	\label{lemmaRescalling}
	Let \(a\) be the alternating point of \(E(\Lambda)\).{}
	Then, for every \(j \in \N_{*}\),{}
	\[{}
	\int_{Q_{j} \setminus Q_{j + 1}} \frac{\dif\mu(z)}{z - a}
	= \frac{(-1)^{j}}{4^{j} \sigma_{j}}	\int_{Q_{0} \setminus \widetilde Q_{1, j}} \frac{\dif\widetilde\mu_{j}(z)}{z - \widetilde a_{j}},
	\]
	where \(\widetilde a_{j}\) is the alternating point of the Cantor set \(E(\widetilde\Lambda_{j})\) associated with the shifted sequence \(\widetilde\Lambda_{j} = (\lambda_{j+1}, \lambda_{j+2}, \ldots)\), \(\widetilde Q_{1, j}\) is the square of generation \(1\) with side length \(\lambda_{j+1}\) that contains \(\widetilde a_{j}\) in the construction of \(E(\widetilde\Lambda_{j})\) and \(\widetilde\mu_{j}\) is the intrinsic probability measure associated with \(E(\widetilde\Lambda_{j})\).
\end{lemma}	

\begin{proof}
	We translate the squares \(Q_{j}\) and \(Q_{j+1}\) using the map \(z \mapsto z - b_{j}\)\,, where \(b_{j}\) is the lower left-hand vertex of \(Q_{j}\) and then rescale the resulting sets with  \(z \mapsto z/\sigma_{j}\).{}
	As a result \(Q_{j}\)\,, with side length \(\sigma_{j}\)\,, is transformed to the unit square \(Q_{0}\).{}
	The combination of both maps transforms the measure \(\mu\) in \(Q_{j}\) to the rescaled measure \(\widetilde\mu_{j}/4^{j}\) in \(Q_{0}\). 
	When \(j\) is even, the point \(a\) is mapped to \(\widetilde a_{j}\) since \(Q_{j + 1}\) and \(\widetilde Q_{1, j}\) are both squares with relative position I inside \(Q_{j}\) and \(Q_{0}\), respectively.
	We then get
	\[{}
	\int_{Q_{j} \setminus Q_{j + 1}} \frac{\dif\mu(z)}{z - a}
	= \frac{1}{4^{j} \sigma_{j}}	\int_{Q_{0} \setminus \widetilde Q_{1, j}} \frac{\dif\widetilde\mu_{j}(z)}{z - \widetilde a_{j}}.
	\]
	In contrast, when \(j\) is odd, \(Q_{j + 1}\) is a square with position III inside \(Q_{j}\)\,, while \(\widetilde Q_{1, j}\) has position I in \(Q_{0}\). 
	Thus, \(a\) is mapped to another point \(\widehat a_{j}\) in \(E(\widetilde\Lambda_{j})\) that arises using the reverted positions III, I, III, I, \(\ldots\)\,
	We have in this case
	\[{}
	\int_{Q_{j} \setminus Q_{j + 1}} \frac{\dif\mu(z)}{z - a}
	= \frac{1}{4^{j} \sigma_{j}} \int_{Q_{0} \setminus \widehat Q_{1, j}} \frac{\dif\widetilde\mu_{j}(z)}{z - \widehat a_{j}},
	\]
	where \(\widehat Q_{1, j}\) is the square of generation \(1\) in \(Q_{0}\) with side-length \(\lambda_{j+1}\) at position III.
	Using the reflection \(R : z \mapsto (1+i) - z\) around the center \((1+i)/2\) of \(Q_{0}\)\,, we have
	\(R(\widehat Q_{1, j}) = \widetilde Q_{1, j}\) and \(R(\widehat a_{j}) = \widetilde a_{j}\).{}
	Since \(\widetilde\mu_{j}\) is invariant under \(R\) by symmetry of \(E(\widetilde\Lambda_{j})\), by a change of variable in the integral we then get
	\[{}
	\int_{Q_{0} \setminus \widehat Q_{1, j}} \frac{\dif\widetilde\mu_{j}(z)}{z - \widehat a_{j}}
	= \int_{Q_{0} \setminus \widetilde Q_{1, j}} \frac{\dif\widetilde\mu_{j}(w)}{(1 + i) - w - \widehat a_{j}}
	= \int_{Q_{0} \setminus \widetilde Q_{1, j}} \frac{\dif\widetilde\mu_{j}(w)}{\widetilde a_{j} - w},
	\]
	which gives the conclusion when \(j\) is odd.
\end{proof}

We assume henceforth that \(a\) is the alternating point of \(E(\Lambda)\).
By \eqref{eqAdditivity} and Lemma~\ref{lemmaRescalling}, we have
\begin{equation}
	\label{eq1373}
\int_{Q_{n} \setminus Q_{m}}{\frac{\dif\mu(z)}{z - a}}
= \sum_{j = n}^{m - 1}{A_{j}I_{j}},
\end{equation}
where
\[{}
A_{j} \vcentcolon= \frac{(-1)^{j}}{4^{j} \sigma_{j}}
\quad \text{and} \quad{}
I_{j} \vcentcolon= \int_{Q_{0} \setminus \widetilde Q_{1, j}} \frac{\dif\widetilde\mu_{j}(z)}{z - \widetilde a_{j}}.
\]
Observe that the sequence \((I_{j})_{j \in \N_{*}}\) is bounded.
Indeed, the squares of generation \(1\) in the construction of \(E(\widetilde\Lambda_{j})\) are away from each other by a distance of at least \(1 - 2\lambda_{j + 1} \ge 2\delta\).{}
Thus,
\begin{equation}
\label{eq1386}
\abs{I_{j}}
\le \frac{\widetilde\mu_{j}(Q_{0})}{1 - 2\lambda_{j + 1}}
\le \frac{1}{2\delta}.
\end{equation}

Using Abel's summation method, we write the sum in \eqref{eq1373} as follows
\begin{equation}
	\label{eq1413}
\int_{Q_{n} \setminus Q_{m}}{\frac{\dif\mu(z)}{z - a}}
= \sum_{j = n}^{m - 2}{\biggl(\, \sum_{k = n}^{j}{A_{k}} \biggr)(I_{j} - I_{j+1})} + \biggl(\,\sum_{k = n}^{m-1}{A_{k}} \biggr)I_{m-1}.
\end{equation}
By assumption, the sequence \(\bigl( (-1)^{k}A_{k} \bigr)_{k \in \N}\) is non-increasing.
From a property of alternate sums,
\[{}
0 \le (-1)^{n}\sum_{k = n}^{j}{A_{k}} 
\le (-1)^{n}A_{n}
= \frac{1}{4^{n} \sigma_{n}}.
\]
It then follows from \eqref{eq1386} and \eqref{eq1413} that
\begin{equation}
	\label{eqEstimateCauchy}
\biggl|\int_{Q_{n} \setminus Q_{m}}{\frac{\dif\mu(z)}{z - a}}\biggr|
\le \frac{1}{4^{n} \sigma_{n}} \biggl(\, \sum_{j = n}^{m - 2}{\abs{I_{j} - I_{j+1}}} + \frac{1}{2\delta} \biggr).
\end{equation}

\begin{lemma}
	\label{lemmaEstimateI}
	For every \(j \in \N\), we have
	\[{}
	\abs{I_{j} - I_{j+1}}
	\le C \sum_{l=j+1}^{\infty}{\frac{1}{2^{l-j}} \abs{\lambda_{l} - \lambda_{l+1}}}.
	\]
\end{lemma}

\resetconstant
\begin{proof}
	Without loss of generality we may assume that \(j = 0\), in which case \(\widetilde\Lambda_{0} = \Lambda\) and \(\widetilde{Q}_{1, 0} = Q_{1}\).
	Given \(k \in \N_{*}\), we recall that \(S_{k, \alpha}\) with \(\alpha \in \{1, \ldots, 4^{k}\}\) are the squares of side length \(\sigma_{k}\) of generation \(k\) that are used in the construction of \(E(\Lambda)\). 
	We may label them so that, for \(\alpha \in \{1, \ldots, 3 \cdot 4^{k -1}\}\), the squares \(S_{k, \alpha}\) cover \(E(\Lambda) \setminus Q_{1}\). 
	Denote the center of \(S_{k, \alpha}\) by \(c_{k, \alpha}\).{}
	Since the measure \(\widetilde\mu_{0} = \mu\) is supported by \(E(\Lambda)\) and \(\mu(S_{k, \alpha}) = 1/4^{k}\) for every \(\alpha\), by additivity of the integral we have
	\[{}
	I_{0} 
	= \int_{Q_{0} \setminus Q_{1}}{\frac{\dif\mu(z)}{z - a}}
	= \sum_{\alpha = 1}^{3 \cdot 4^{k-1}} \int_{S_{k, \alpha}} \Bigl( \frac{1}{z - a} - \frac{1}{c_{k, \alpha} - a} \Bigr) \dif\mu(z)
	+ \frac{1}{4^{k}} \sum_{\alpha = 1}^{3 \cdot 4^{k-1}}{\frac{1}{c_{k, \alpha} - a}}.
	\]
	Since the distance between \(S_{k, \alpha}\) and \(Q_{1}\) is larger than \(1 - 2\lambda_{1} \ge 2\delta\), applying the Mean Value Theorem for every \(z \in S_{k, \alpha}\)\,,
	\[{}
	\Bigabs{\frac{1}{z - a} - \frac{1}{c_{k, \alpha} - a}}
	\le \C \abs{z - c_{k, \alpha}}
	\le \Cl{cte-1171} \, \sigma_{k}\,,
	\]
	where the constants involved depend on \(\delta\).
	Since \(\lambda_{k} \le 1/2\) for every \(k\), we have that \(\sigma_{k} \le 1/2^{k}\) and then
	\begin{equation}
		\label{eq1470}
	\biggl| I_{0} - \frac{1}{4^{k}} \sum_{\alpha = 1}^{3 \cdot 4^{k-1}}{\frac{1}{c_{k, \alpha} - a}} \biggr|{}
	 \le \sum_{\alpha = 1}^{3 \cdot 4^{k-1}} \int_{S_{k, \alpha}} \Bigabs{ \frac{1}{z - a} - \frac{1}{c_{k, \alpha} - a}} \dif\mu(z)
	 \le 3 \cdot 4^{k-1} \cdot \frac{\Cr{cte-1171} \, \sigma_{k}}{4^{k}}
	\le \frac{\Cr{cte-1171}}{2^{k}}.
	\end{equation}
	Analogously, let us denote by \(S_{k, \alpha}'\) the squares of side length \(\sigma_{k}' \vcentcolon= \lambda_{2} \cdots \lambda_{k+1}\) of generation \(k\) that are used in the construction of \(E(\widetilde\Lambda_{1})\) and by \(c_{k, \alpha}'\) their centers.
	We label these squares so that \(S_{k, \alpha}\) and \(S_{k, \alpha}'\) have the same relative position inside \(Q_{0}\) for every \(\alpha \in \{1, \ldots, 4^{k}\}\).{}
	In particular, if \((\alpha_{k})_{k \in \N_{*}}\) is the sequence of integers such that \(a \in S_{k, \alpha_{k}}\) for every \(k \in \N_{*}\)\,, then 
	\begin{equation}
		\label{eqConvergenceCenters}
	c_{k, \alpha_{k}} \to a 
	\quad \text{and} \quad 
	c_{k, \alpha_{k}}' \to \widetilde a_{1}.
	\end{equation}
	Using the counterpart of \eqref{eq1470} for the shifted sequence \(\widetilde\Lambda_{1}\),
	by the triangle inequality we get
	\[{}
	\abs{I_{0} - I_{1}}
	\le \frac{1}{4^{k}} \sum_{\alpha = 1}^{3 \cdot 4^{k-1}}{\Bigabs{\frac{1}{c_{k, \alpha} - a} - \frac{1}{c_{k, \alpha}' - \widetilde{a}_{1}}}} + \frac{\Cr{cte-1171}}{2^{k-1}}.
	\]
	By the Mean Value Theorem, for every \(\alpha \in \{1, \ldots, 3 \cdot 4^{k -1}\}\),
	\[{}
	\Bigabs{\frac{1}{c_{k, \alpha} - a} - \frac{1}{c_{k, \alpha}' - \widetilde{a}_{1}}}
	\le \Cl{cte-1197} \bigl( \abs{c_{k, \alpha} - c_{k, \alpha}'} + \abs{a - \widetilde{a}_{1}} \bigr).
	\]
	Thus,
	\begin{equation}
		\label{eq1500}
	\abs{I_{0} - I_{1}}
	\le \frac{\Cr{cte-1197}}{4^{k}} \sum_{\alpha = 1}^{3 \cdot 4^{k-1}}{\abs{c_{k, \alpha} - c_{k, \alpha}'}} + \Cr{cte-1197}\abs{a - \widetilde{a}_{1}} + \frac{\Cr{cte-1171}}{2^{k-1}}.
	\end{equation}
	
	\begin{Claim}
		For every \(k \in \N_{*}\) and every \(\alpha \in \{1, \ldots, 4^{k}\}\),{}
		\begin{equation}
			\label{eq1508}
		\abs{c_{k, \alpha} - c_{k, \alpha}'}
		\le 4 \sqrt{2} \sum_{l = 1}^{k}{\frac{1}{2^{l}} \abs{\lambda_{l} - \lambda_{l+1}}}.
		\end{equation}
	\end{Claim}
	
	\begin{proof}[Proof of the Claim]
		We first show that each coordinate of \(c_{k, \alpha}\) is a polynomial of the form
		\begin{equation}
			\label{eqPolynomial}
		P_{k, \alpha}(\lambda_1, \dots, \lambda_k) = \beta_0 + \beta_1 \,\lambda_1 + \beta_2 \, \lambda_1 \lambda_2+ \cdots + \beta_k \,
		\lambda_1 \cdots \lambda_k,
		\end{equation}
		where \(\beta_j \in \{0, -1, 1\}\) for every \(j \in \{0, 1, \dots, k-1\}\) and  $\beta_k \in \{-1/2, 1/2\}$.{}
		Let us focus for example on the first component.
		The first coordinate of \(c_{1, \alpha}\) is of the form 
		\[{}
		\frac{1}{2} \pm \frac{1}{2} (1 - \lambda_{1})
		= \frac{1}{2} \pm \frac{1}{2}  \mp \frac{1}{2} \lambda_{1}.
		\]
		so the conclusion holds for \(k = 1\).{}
		We now take any \(k \ge 1\), a point \(c_{k+1, \alpha}\) and the square \(S_{k, \widetilde\alpha}\) of generation \(k\) that contains it, including three other centers of generation \(k + 1\).{}
		Then, assuming that the first component  \(x_{k}\) of \(c_{k, \widetilde\alpha}\) is of the form \eqref{eqPolynomial}, the first coordinate of \(c_{k + 1, \alpha}\) is then given by 
		\[
		x_{k} \pm \frac{1}{2} (\sigma_{k} - \sigma_{k+1})
		= \beta_0 + \beta_1 \,\lambda_1 + \beta_2 \, \lambda_1 \lambda_2+ \dots + \Bigl(\beta_k \pm \frac{1}{2}\Bigr) \,
		\lambda_1 \cdots \lambda_k \mp \frac{1}{2} \lambda_{1} \cdots \lambda_{k+1},
		\]
		which also has the form \eqref{eqPolynomial} for \(k+1\) as we wanted to show.{}
		
		The coefficients of the polynomials \(P_{k, \alpha}\) associated with the components of \(c_{k, \alpha}\) depend solely on \(k\) and \(\alpha\). Hence, the components of \(c_{k, \alpha}'\) are given by the same polynomials, but evaluated at \((\lambda_{2}, \dots, \lambda_{k+1})\).{}
		For example, if \(x_{k}\) and \(x_{k}'\) denote one of their components, then
		\[{}
		\begin{split}
		x_{k} - x_{k}'
		& = P_{k, \alpha}(\lambda_{1}, \dots, \lambda_{k}) - P_{k, \alpha}(\lambda_{2}, \dots, \lambda_{k+1})\\
		& = \beta_{1}(\lambda_{1} - \lambda_{2}) + \beta_{2} (\lambda_{1}\lambda_{2} - \lambda_{2} \lambda_{3})
		+ \cdots + \beta_{k}(\lambda_{1} \cdots \lambda_{k} - \lambda_{2} \cdots \lambda_{k+1})\\
		& = \beta_{1}(\lambda_{1} - \lambda_{2}) + \beta_{2} \lambda_{2} (\lambda_{1} - \lambda_{3})
		+ \cdots + \beta_{k} \lambda_{2} \cdots \lambda_{k} (\lambda_{1} - \lambda_{k+1}).
		\end{split}
		\]
		Using the triangle inequality and the fact that \(\lambda_{j} \le 1/2\) for each \(j\), we get
\[
\begin{split}
| c_{k, \alpha} - c_{k, \alpha}' | 
& \le \sqrt{2}\bigl( \abs{\lambda_1 - \lambda_2} + \lambda_2 \abs{\lambda_1 - \lambda_3} + \cdots + \lambda_2\lambda_3\cdots\lambda_{k} \abs{\lambda_1 - \lambda_{k+1}} \bigr)\\
& \le 2\sqrt{2} \sum_{j = 1}^{k}{\frac{1}{2^{j}}\abs{\lambda_1 - \lambda_{j + 1}}}.
\end{split}
\]
Using that \(\abs{\lambda_1 - \lambda_{j + 1}} \le \sum\limits_{l = 1}^{j}{\abs{\lambda_{l} - \lambda_{l+1}}}\) and interchanging the order of summation, the claim follows.
	\end{proof}

Applying estimate \eqref{eq1508} with the sequence of integers \((\alpha_{k})_{k \in \N_{*}}\) that satisfies \eqref{eqConvergenceCenters} and letting \(k \to \infty\), we deduce that
	\[{}
	\abs{a - \widetilde a_{1}}
	\le 4\sqrt{2} \sum_{l = 1}^{\infty}{\frac{1}{2^{l}} \abs{\lambda_{l} - \lambda_{l+1}}}.
	\]
By the Claim and \eqref{eq1500} we thus have, for every \(k \ge 1\),{}
	\[{}
	\abs{I_{0} - I_{1}}
	\le \C \sum_{l = 1}^{\infty}{\frac{1}{2^{l}} \abs{\lambda_{l} - \lambda_{l+1}}} + \frac{\Cr{cte-1171}}{2^{k-1}}.
	\]
	Letting \(k \to \infty\), the lemma follows.
\end{proof}

Combining \eqref{eqEstimateCauchy} and Lemma~\ref{lemmaEstimateI}, for every \(m > n \ge 1\) we get
\[
\biggl|\int_{Q_{n} \setminus Q_{m}}{\frac{\dif\mu(z)}{z - a}}\biggr|
\le \frac{\Cl{cte-1282}}{4^{n} \sigma_{n}} \biggl(\,\sum_{j = n}^{\infty}{\sum_{l=j+1}^{\infty}{\frac{1}{2^{l-j}} \abs{\lambda_{l} - \lambda_{l+1}}}} + \frac{1}{2\delta} \biggr){}.
\]
Interchanging the order of summation, we deduce that
\[{}
\biggl|\int_{Q_{n} \setminus Q_{m}}{\frac{\dif\mu(z)}{z - a}}\biggr|
\le \frac{\Cr{cte-1282}}{4^{n} \sigma_{n}} \biggl(\,\sum_{l = n+1}^{\infty}{\abs{\lambda_{l} - \lambda_{l+1}}} + \frac{1}{2\delta} \biggr){}.
\]
Recalling that \(\sum\limits_{l = 1}^{\infty}{\abs{\lambda_{l} - \lambda_{l+1}}} < \infty\), we then have
\[{}
\biggl|\int_{Q_{n} \setminus Q_{m}}{\frac{\dif\mu(z)}{z - a}}\biggr|{}
\le \frac{\C}{4^{n} \sigma_{n}}
\quad \text{for every \(m > n \ge N\).}
\]
Since \(4^{n} \sigma_{n} \to \infty\) as \(n \to \infty\), we conclude that the limit in \eqref{eq1249bis} exists and then the Cauchy integral of \(\mu\) has a principal value at \(a\).{}

To complete the proof, it remains to consider the case where \eqref{eq1253} fails, which implies that \(\lambda_{n} \to 1/2\) as \(n \to \infty\).{}
The argument here does not require the monotonicity of \(\Lambda\).
Indeed, from the convergence of \(\Lambda\) to \(1/2\), for every \(1 < s < 2\) there exists a constant \(\Cl{cte1601} > 0\) such that
\[{}
\mu(B_{r}(a)) 
\le \Cr{cte1601} \, r^{s}
\quad \text{for every \(a \in \CC\) and \(r > 0\)}.
\]
Given any \(a \in \CC\), by Cavalieri's principle one then gets
\[{}
\int_{\CC}\frac{\dif\mu(z)}{|z - a|}
= \int_{0}^{\infty} \frac{\mu(B_{r}(a))}{r^{2}} \dif r
 < \infty.
\]
Hence, the principal value of \(C(\mu)\) exists everywhere.
\qed


\section*{Acknowledgements}

The authors would like to thank L.~Grafakos and P. Mattila for an interesting correspondence.
J.~Cuf\'i acknowledges partial support from grant 2017SGR358 (Generalitat de Catalunya) and J.~Verdera from grants 2017SGR395
(Generalitat de Catalunya),  MTM2016--75390 and PID2020-112881GB-I00 (Mi\-nis\-terio de
Educaci\'{o}n y Ciencia).
A.~C. Ponce is grateful for the invitation, hospitality and support of the Departament de Matemàtiques of the Universitat Autònoma de Barcelona.
He also acknowledges support of the Fonds de la Recherche scientifique (F.R.S.--FNRS) from grant J.0020.18.


\begin{bibdiv}

\begin{biblist}

\bib{Carleson}{book}{
   author={Carleson, Lennart},
   title={Selected problems on exceptional sets},
   series={Van Nostrand Mathematical Studies, No. 13},
   publisher={Van Nostrand},
   place={Princeton, NJ},
   date={1967},
}

\bib{Cufi-Verdera}{article}{
   author={Cuf\'{\i}, Juli\`a},
   author={Verdera, Joan},
   title={Differentiability properties of Riesz potentials of finite
   measures and non-doubling Calder\'{o}n-Zygmund theory},
   journal={Ann. Sc. Norm. Super. Pisa Cl. Sci. (5)},
   volume={18},
   date={2018},
   number={3},
   pages={1081--1123},
}

\bib{David}{book}{
   author={David, Guy},
   title={Wavelets and singular integrals on curves and surfaces},
   series={Lecture Notes in Mathematics},
   volume={1465},
   publisher={Springer-Verlag},
   place={Berlin},
   date={1991},
}

\bib{Evans-Gariepy}{book}{
   author={Evans, Lawrence C.},
   author={Gariepy, Ronald F.},
   title={Measure theory and fine properties of functions},
   series={Textbooks in Mathematics},
   edition={Revised edition},
   publisher={CRC Press},
   place={Boca Raton, FL},
   date={2015},
}

\bib{Mateu-Tolsa-Verdera}{article}{
   author={Mateu, Joan},
   author={Tolsa, Xavier},
   author={Verdera, Joan},
   title={The planar Cantor sets of zero analytic capacity and the
local $T(b)$ theorem},
   journal={J. Amer. Math. Soc.},
   volume={16},
   date={2003},
   number={},
   pages={19--28},
}

\bib{Mattila}{book}{
   author={Mattila, Pertti},
   title={Geometry of sets and measures in Euclidean spaces: Fractals and rectifiability},
   series={Cambridge Studies in Advanced Mathematics},
   volume={44},
   publisher={Cambridge University Press},
   place={Cambridge},
   date={1995},
}

\bib{Mattila-Verdera}{article}{
   author={Mattila, Pertti},
   author={Verdera, Joan},
   title={Convergence of singular integrals with general measures},
   journal={J. Eur. Math. Soc. (JEMS)},
   volume={11},
   date={2009},
   number={2},
   pages={257--271},
}

\bib{Ponce}{article}{
   author={Ponce, Augusto C.},
   title={Singularities of the divergence of continuous vector fields and
   uniform Hausdorff estimates},
   journal={Indiana Univ. Math. J.},
   volume={62},
   date={2013},
   number={4},
   pages={1055--1074},
}

\bib{Ponce:2016}{book}{
  author={Ponce, Augusto C.},
  title={Elliptic PDEs, measures and capacities: From the Poisson equation to nonlinear Thomas-Fermi problems},
   series={EMS Tracts in Mathematics},
   volume={23},
   publisher={European Mathematical Society (EMS)},
   place={Zürich},
   date={2016},
}

\bib{Prat}{article}{
   author={Prat, Laura},
   title={Potential theory of signed Riesz kernels: capacity and Hausdorff
   measure},
   journal={Int. Math. Res. Not.},
   date={2004},
   number={19},
   pages={937--981},
}

\bib{Prat:2012}{article}{
   author={Prat, Laura},
   title={On the semiadditivity of the capacities associated with signed
   vector valued Riesz kernels},
   journal={Trans. Amer. Math. Soc.},
   volume={364},
   date={2012},
   number={11},
   pages={5673--5691},
}

\bib{RuizdeVilla-Tolsa}{article}{
   author={Ruiz de Villa, Aleix},
   author={Tolsa, Xavier},
  title={Characterization and semiadditivity of the $\scr C^1$-harmonic
   capacity},
   journal={Trans. Amer. Math. Soc.},
   volume={362},
   date={2010},
   number={7},
   pages={3641--3675},
}

\bib{Tolsa:2004}{article}{
   author={Tolsa, Xavier},
   title={The semiadditivity of continuous analytic capacity and the inner
   boundary conjecture},
   journal={Amer. J. Math.},
   volume={126},
   date={2004},
   number={3},
   pages={523--567},
}

\bib{Tolsa:book}{book}{
   author={Tolsa, Xavier},
   title={ Analytic capacity, the Cauchy transform, and non-homogeneous Calder\'on-Zygmund theory},
   series={Progress in Mathematics},
   publisher={Birkh\"{a}user/Springer, New York},
   date={2014},
}

\bib{Verdera}{article}{
   author={Verdera, Joan},
   title={Capacitary differentiability of potentials of finite Radon measures},
   journal={Ark. Mat.},
   volume={57},
   date={2019},
   pages={437--450},
}

\bib{Willem}{book}{
   author={Willem, Michel},
   title={Functional analysis: Fundamentals and applications},
   series={Cornerstones},
   publisher={Birkh\"{a}user/Springer, New York},
   date={2013},
}

\end{biblist}

\end{bibdiv}

\end{document}